\newtheorem{theorem}{Theorem}[section]
\newtheorem*{theorem*}{Theorem}
\newtheorem{lemma}{Lemma}[section]
\newtheorem{corollary}[theorem]{Corollary}
\newtheorem{proposition}{Proposition}[section]
\def\p{\partial}
\def\C{\Bbb C}
\def\aint{\frac{\ \ }{\ \ }{\hskip -0.4cm}\int}
\numberwithin{equation}{section}
\begin{document}
	\title[Gap Theorem]{Gap theorem on K\"ahler manifold with nonnegative orthogonal bisectional curvature}

\author{Lei Ni}
\address{Department of Mathematics, University of California, San Diego, La Jolla, CA 92093, USA}
\email{lni@math.ucsd.edu}

\author{Yanyan Niu}
\address{School of Mathematical Sciences, Capital Normal University, Beijing, China}
\email{yyniukxe@gmail.com}

\thanks{  The research of the  first author is partially supported by NSF grant DMS-1401500.  }

\thanks{  The research of the second author is partially supported by NSFC grant NSFC-11301354, NSFC-11571260, and by Youth Innovation Research Team of Capital Normal University.}

\maketitle

\begin{abstract} In this paper we prove a gap theorem for K\"ahler manifolds with nonnegative orthogonal bisectional curvature and nonnegative Ricci curvature, which generalizes an earlier result of the first author. We also prove a Liouville theorem for plurisubharmonic functions on such a manifolds, which generalizes a previous result of L.-F. Tam and the first author.
\end{abstract}

\section{Introduction}

In \cite{Ni-gap}, the following result was proved.

\begin{theorem}\label{tm:gap}
		Let $(M^n, g)$ be a complete noncompact K\"ahler manifold with nonnegative  bisectional curvature. Then $M$ is flat if for some $o\in M$,
		\begin{equation}\label{eq:1}
		\frac{1}{V_o(r)}\int_{B_o(r)} S(y)d\mu(y)=o(r^{-2}),
		\end{equation}
		where $V_o(r)$ is the volume of $B_o(r)$ and $S(y)$ is the scalar curvature.
		\end{theorem}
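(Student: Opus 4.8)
\medskip
\noindent\emph{Proposed proof.} The plan is to run the heat flow of the scalar curvature $S$, read off from \eqref{eq:1} that a natural heat-kernel-weighted density tends to zero at infinity, and then invoke a monotonicity formula (available because the bisectional curvature is nonnegative) to force this density to vanish identically; this forces $S\equiv 0$, whence flatness. To begin, dispose of the case $S\equiv 0$: in a unitary frame $S$ is a positive multiple of $\sum_{i,j}R_{i\bar i j\bar j}$ and each summand is nonnegative, so $R_{i\bar i j\bar j}\equiv 0$ in every frame, and by polarization the whole curvature tensor vanishes, i.e. $M$ is flat. So assume $S\not\equiv 0$. Note that nonnegative bisectional curvature implies $\Ric\ge 0$ (hence $S\ge 0$); therefore $M$ has polynomial volume growth, satisfies Bishop--Gromov comparison, and its heat kernel $H(x,y,t)$ obeys the Li--Yau Gaussian bounds.

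Let $u(x,t)$ be the minimal positive solution of $\bigl(\tfrac{\partial}{\partial t}-\Delta\bigr)u=0$ on $M\times(0,\infty)$ with $u(\cdot,0)=S$. Since, as we check next, $\int_M H(o,y,t)S(y)\,d\mu(y)<\infty$, this solution exists, equals that integral, and by the strong maximum principle $u(x,t)>0$ for all $x$, $t>0$. Using $H(o,y,t)\le C\,V_o(\sqrt t)^{-1}\exp\!\bigl(-d(o,y)^2/5t\bigr)$, splitting $M$ into $B_o(\sqrt t)$ and the dyadic annuli $B_o(2^{k+1}\sqrt t)\setminus B_o(2^{k}\sqrt t)$, and using $V_o(2^{k+1}\sqrt t)\le (2^{k+1})^{2n}V_o(\sqrt t)$, one gets
\begin{equation*}
u(o,t)\ \le\ C\sum_{k\ge 0}e^{-4^{k}/5}(2^{k+1})^{2n}\cdot\frac{1}{V_o(2^{k+1}\sqrt t)}\int_{B_o(2^{k+1}\sqrt t)}S\,d\mu ,
\end{equation*}
a convergent series because $e^{-4^{k}/5}$ decays faster than $(2^{k+1})^{2n}$ grows; hence \eqref{eq:1} gives $u(o,t)=o(t^{-1})$, i.e. $\lim_{t\to\infty}t\,u(o,t)=0$.

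The heart of the matter is a monotonicity formula. Because $M$ has nonnegative bisectional curvature and $u$ is the heat flow of the scalar curvature, $u$ obeys a sharpened Li--Yau--Hamilton inequality $\Delta u+\tfrac1t u\ge 0$ --- the K\"ahler strengthening, in the spirit of Cao's trace Harnack, of the generic matrix Harnack $\Delta u+\tfrac nt u\ge 0$ --- so $\Theta(t):=t\,u(o,t)$ is nondecreasing on $(0,\infty)$. It is also continuous, nonnegative, and $\lim_{t\to 0^{+}}\Theta(t)=0$ since $u(\cdot,t)\to S$ locally uniformly; combined with $\lim_{t\to\infty}\Theta(t)=0$ this forces $\Theta\equiv 0$, so $u(o,t)=0$ for every $t>0$. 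As $u\ge 0$ solves the heat equation and vanishes at an interior spacetime point, the strong maximum principle gives $u\equiv 0$ on $M\times[0,t_0]$, hence $S=u(\cdot,0)\equiv 0$, and $M$ is flat by the first paragraph.

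\medskip
\noindent The main obstacle is the monotonicity step: proving that $t\,u(o,t)$ is nondecreasing --- equivalently the sharp bound $\Delta u+\tfrac1t u\ge 0$ for the heat flow of $S$, which is indispensable once $n\ge 2$ --- is where the K\"ahler structure and the nonnegativity of the bisectional curvature are genuinely used, and it has to be done with no a priori curvature bound. This requires the interior/local estimates for the heat equation and the K\"ahler--Ricci flow together with an exhaustion-and-cutoff argument to justify the maximum principle on noncompact $M$; one must also verify the heat-kernel representation of $u$ and the convergence $u(\cdot,t)\to S$ using only the averaged bound \eqref{eq:1} rather than a pointwise bound on the curvature.
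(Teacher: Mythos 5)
Your proposal is correct in outline, but it is not the route this paper takes: it is essentially the original argument of \cite{Ni-gap}, which the introduction here describes and deliberately replaces. You run the heat flow $u$ of the scalar curvature, use the Li--Yau heat kernel bound and Bishop--Gromov volume comparison on dyadic annuli to convert (\ref{eq:1}) into $\lim_{t\to\infty}t\,u(o,t)=0$, and then invoke the monotonicity of $t\,u(o,t)$ via $\Delta u+\frac{1}{t}u\ge 0$. Be aware that this last inequality is not a general property of heat flows of nonnegative functions on manifolds with nonnegative bisectional curvature: it is the Li--Yau--Hamilton/monotonicity theorem of \cite{Ni-JAMS}, applied to the Hodge--Laplacian heat flow of the $d$-closed nonnegative Ricci form, whose trace is your $u$ (the $d$-closedness is what turns the divergence terms of the Harnack quantity into $\Delta u$, and keeping the evolved form nonnegative on a noncompact manifold already requires the maximum principle of \cite{NT1} under growth hypotheses supplied by (\ref{eq:1})). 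If you quote \cite{Ni-JAMS} the proof closes; proving that estimate from scratch, as you yourself flag in your last paragraph, is the real content and occupies \cite{Ni-JAMS} and \cite{Ni-gap}. The paper's own proof is different: it solves the Poincar\'e--Lelong equation $\rho=\sqrt{-1}\partial\bar\partial u$ for the Ricci form by Theorem \ref{tm:NT}, checks that (\ref{eq:1}) forces $u(x)=o(\log r(x))$, applies the Liouville theorem for plurisubharmonic functions (Theorem \ref{tm:Liouville}, extended to (NOB) and nonnegative Ricci curvature via a tensor maximum principle and a De Rham splitting) to conclude $u$ is constant, hence $\Ric\equiv 0$, and then an algebraic (NOB) argument yields flatness. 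The trade-off: your route uses the full nonnegativity of the bisectional curvature in an essential way (the monotonicity of \cite{Ni-JAMS} is not available under (NOB) alone) and, granting that machinery, is shorter; the paper's route avoids the Li--Yau--Hamilton monotonicity altogether and therefore proves the stronger statement under (NOB) and $\Ric\ge 0$, which is the point of this paper, while under your hypotheses the final step is immediate since $S\equiv 0$ together with $R_{i\bar i j\bar j}\ge 0$ forces the curvature to vanish.
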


 A result of this type was originated by Mok-Siu-Yau in \cite{MSY}, where it was proved that $M$ is isometric to $\C^m$ under much stronger assumptions that $(M^m, g)$ (with $m\ge 2$) is of maximum volume growth (meaning that $V_o(r)\ge \delta r^{2m}$  for some $\delta>0$) and $\mathcal{S}(x)$ decays pointwisely as $r(x)^{-2-\epsilon}$ for some $\epsilon>0$. A Riemannian version of this result in \cite{MSY} was proved  by Greene-Wu  \cite{GW} shortly afterwards (see also \cite{GW2} for related results).
In  \cite{Ni}, Theorem 5.1, with a parabolic method introduced on solving the so-called Poincar\'e-Lelong equation,  the result of \cite{MSY} was improved to the cases covering  manifolds of more general volume growth. Since then there are several further works aiming to prove the optimal result. See for example  \cite{nst}, \cite{CZ}.  In  particular the Ricci flow method was applied in one of these papers. In \cite{NT1}, using a Liouville theorem concerning the plurisubharmonic functions on a complete K\"ahler manifold, and the solution of Poincar\'e-Lelong equation obtained therein, Theorem \ref{tm:gap}  was proved with  an additional exponential growth assumption on the integral of {\it the square of the scalar curvature} over geodesic balls, which was removed in \cite{Ni-gap} using a different method.

The approach of \cite{Ni-gap} toward Theorem \ref{tm:gap} is via the asymptotic behavior of the optimal solution obtained by evolving a $(1,1)$-form with the initial data being the Ricci form through the heat flow of the Hodge-Laplacian operator. The key component of the proof is the monotonicity obtained in \cite{Ni-JAMS} (see also \cite{NN}), which makes the use of the nonnegativity of the bisectional curvature crucially. On the other hand, in \cite{NT2}, the authors proved that the method of deforming a $(1,1)$-form via the Hodge-Laplacian heat equation and studying the asymptotic behavior of the solution can be applied to solve the Poincar\'e-Lelong equation and obtain an optimal solution for it. Namely the following result was proved.

\begin{theorem}\label{tm:NT}
		Let $(M^n, g)$ be a complete noncompact K\"ahler manifold with nonnegative Ricci curvature and nonnegative quadratic orthogonal bisectional curvature. Suppose that $\rho$ is a smooth closed real $(1,  1)$-form on $M$ and let $f=\|\rho\|$ be the norm of $\rho$. Suppose that
		\begin{equation}\label{eq:2}
		\int_0^\infty k_f(r)dr<\infty,
		\end{equation}
		where
		$$
		k_f(r)=\frac{1}{V_o(r)}\int_{B_o(r)}|f|d\mu,
		$$
	    for some fixed point $o\in M$. Then there is a smooth function $u$ so that $\rho=\sqrt{-1}\p\bar{\p}u$. Moreover, for any $0<\epsilon<1$, $u$ satisfies
	    \begin{eqnarray}\label{eq:3}
	    &\,&\alpha_1 r\int_{2r}^\infty k_{\|\rho\|}(s)ds+\beta_1\int_0^{2r}sk_{\|\rho\|}(s)ds\ge u(x)\nonumber\\
	    &\ge &\beta_3\int_0^{2r}s k_{\|\rho\|}(s)ds-\alpha_2 r\int_{2r}^\infty k_{\|\rho\|}(s)ds-\beta_2\int_0^{\epsilon r}s k_{\|\rho\|}(s)ds
	    \end{eqnarray}
	    for some positive constants $\alpha_1(n), \alpha_2(n, \epsilon)$ and $\beta_i(n), 1\le i\le 3$, where $r=r(x)$.
	\end{theorem}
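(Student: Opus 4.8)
The plan is to deform $\rho$ by the heat equation of the Hodge--de Rham Laplacian $\Delta_d$ and to read $u$ off from the long--time behaviour of the deformation, in the spirit of \cite{NT1}; the new point, which is what allows the weak hypothesis \eqref{eq:2} in place of $\int_0^\infty s\,k_{\|\rho\|}(s)\,ds<\infty$, is to realize the potential as a \emph{difference} of the evolved quantity evaluated at $x$ and at the base point $o$. Let $\rho(x,\tau)$ be the long--time solution of $\frac{\p}{\p\tau}\rho(\cdot,\tau)=-\Delta_d\rho(\cdot,\tau)$ with $\rho(\cdot,0)=\rho$, obtained as the locally uniform limit of solutions with compactly supported initial data; since $M$ is K\"ahler and $\rho$ is closed real $(1,1)$, so is every $\rho(\cdot,\tau)$. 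The first step is the sharp pointwise bound
\[
\heatau\,\|\rho(\cdot,\tau)\|\le 0
\]
in the barrier sense. By the Weitzenb\"ock formula $\Delta_d=\nabla^*\nabla+\mathcal R$, and evaluating the zero--order term $\mathcal R$ on a real $(1,1)$--form written in a unitary frame diagonalizing it with eigenvalues $\lambda_i$ produces a sum of terms of the types $R_{i\bar i j\bar j}(\lambda_i-\lambda_j)^2$ and $R_{i\bar i}\lambda_i^2$, which is $\ge 0$ \emph{precisely} because $M$ has nonnegative Ricci curvature and nonnegative quadratic orthogonal bisectional curvature --- this is the only place where the curvature hypotheses enter. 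Together with Kato's inequality and a standard cutoff argument on the complete noncompact $M$ (using the at most Euclidean volume growth from $\Ric\ge 0$), the parabolic maximum principle then yields $\|\rho(x,\tau)\|\le\int_M H(x,y,\tau)\,\|\rho\|(y)\,d\mu(y)$, $H$ the heat kernel of $M$; in particular $\|\rho(\cdot,\tau)\|\to 0$ as $\tau\to\infty$, by \eqref{eq:2} and the heat kernel estimates below.

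Next, since $[\Lambda,\Delta_d]=0$ on a K\"ahler manifold ($\Lambda$ the contraction with the K\"ahler form), the trace $g(x,\tau):=\Lambda\rho(x,\tau)$ solves the scalar heat equation $\heatau\,g=0$ with $g(\cdot,0)=\Lambda\rho$, and for a closed real $(1,1)$--form the K\"ahler identities give $\Delta_d\rho(\tau)=-2\sqrt{-1}\,\partial\bar\partial\big(\Lambda\rho(\tau)\big)$. Hence $\frac{\p}{\p\tau}\rho(\tau)=2\sqrt{-1}\,\partial\bar\partial g(\tau)$, so integrating and subtracting the ($x$--independent) constant $2\int_0^\tau g(o,s)\,ds$ we obtain a smooth function
\[
u_\tau(x):=2\int_0^\tau\big(g(o,s)-g(x,s)\big)\,ds,\qquad \sqrt{-1}\,\partial\bar\partial u_\tau=\rho-\rho(\cdot,\tau),\qquad u_\tau(o)=0 .
\]
Once one shows $u_\tau$ converges locally uniformly to a smooth $u$, letting $\tau\to\infty$ and using $\rho(\cdot,\tau)\to 0$ gives $\rho=\sqrt{-1}\,\partial\bar\partial u$, the existence assertion of the theorem.

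Convergence and the upper bound in \eqref{eq:3} both come from writing $g(x,s)=\int_M H(x,y,s)\,\Lambda\rho(y)\,d\mu(y)$, using $|\Lambda\rho|\le\sqrt n\,\|\rho\|$, and splitting the $s$--integral at $s=r^2$, $r=r(x)$: for $s\le r^2$ bound $|g(o,s)-g(x,s)|$ crudely by $\sqrt n\int_M\big(H(o,y,s)+H(x,y,s)\big)\|\rho\|\,d\mu$, while for $s\ge r^2$ the Li--Yau gradient estimate for $H$ gives $|g(o,s)-g(x,s)|\le C\,\frac{r}{\sqrt s}\int_M H(x,y,s)\|\rho\|\,d\mu$. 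The Gaussian upper bound for $H$ and volume doubling --- both consequences of $\Ric\ge 0$ --- dominate the spatial integrals by $C\,k_{\|\rho\|}(\sqrt s)$ plus a rapidly decaying tail, so that, after the substitution $\sigma=\sqrt s$, one gets $\int_0^{r^2}|g(o,s)-g(x,s)|\,ds\lesssim\int_0^{2r}\sigma\,k_{\|\rho\|}(\sigma)\,d\sigma$ and $\int_{r^2}^\infty|g(o,s)-g(x,s)|\,ds\lesssim r\int_{2r}^\infty k_{\|\rho\|}(\sigma)\,d\sigma$; the latter is finite by \eqref{eq:2}. Hence $u_\tau\to u$, $\rho=\sqrt{-1}\,\partial\bar\partial u$, and summing the two estimates gives the upper bound in \eqref{eq:3} with constants $\alpha_1(n),\beta_1(n)$. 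It is precisely the gain of the factor $r/\sqrt s$ for large $s$ --- available only because the potential is a difference $g(o,\cdot)-g(x,\cdot)$ --- that makes \eqref{eq:2} suffice.

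The lower bound in \eqref{eq:3} is the point I expect to be the main obstacle, because there one cannot pass to absolute values but must retain the positive part of $u(x)=2\int_0^\infty\big(g(o,s)-g(x,s)\big)\,ds$. The scheme is: over a definite subinterval of $\big[(\epsilon r)^2,(2r)^2\big]$ one bounds $g(o,s)-g(x,s)$ \emph{below} by a positive multiple of $k_{\|\rho\|}(\sqrt s)$, using now the \emph{lower} Gaussian estimate for $H$ (again from $\Ric\ge 0$) and the local gradient estimate to compare $g(\cdot,s)$ with ball--averages of $\Lambda\rho$; integrating this produces the term $\beta_3\int_0^{2r}\sigma\,k_{\|\rho\|}(\sigma)\,d\sigma$. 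The sign--indefinite contribution from $s\ge(2r)^2$ is controlled as in the upper bound by $\alpha_2 r\int_{2r}^\infty k_{\|\rho\|}$, while the remaining small--$s$ part near $x$ (and near $o$) is absorbed into $\beta_2\int_0^{\epsilon r}\sigma\,k_{\|\rho\|}(\sigma)\,d\sigma$; the auxiliary parameter $\epsilon$ serves exactly to localize the positive contribution away from $o$, which is why $\alpha_2$ depends on $\epsilon$. Collecting the three ranges and tracking the dimensional (and $\epsilon$--dependent) constants yields \eqref{eq:3}. Throughout, the justification of the maximum principle and of interchanging $\partial\bar\partial$ with the $\tau$--integral on the noncompact $M$ is routine, handled by exhaustion and cutoff arguments.
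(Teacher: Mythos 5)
Your proposal takes essentially the same route as the source of this statement: the paper does not prove Theorem \ref{tm:NT} itself but quotes it from Ni--Tam \cite{NT2}, whose method is exactly what you reconstruct --- deform $\rho$ by the Hodge--Laplace heat flow, use that for a $d$-closed real $(1,1)$-form $\Delta_d\rho$ is (up to a constant) $\sqrt{-1}\p\bar{\p}(\Lambda\rho)$ so that the potential is the time integral of the trace normalized by its value at $o$, control $\|\rho(\cdot,\tau)\|$ via the maximum principle (NQOB giving the sign of the zero-order term, $\Ric\ge 0$ supplying the heat kernel, Li--Yau and volume-doubling tools), and obtain (\ref{eq:3}) by splitting the time integral at $s=r^2$, with the gain of the factor $r/\sqrt{s}$ from differencing at $o$ and $x$ being precisely what makes the weak hypothesis (\ref{eq:2}) suffice. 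So the proposal is correct in outline and essentially identical in approach to the cited proof.
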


Recall that a K\"ahler manifold $(M^n, g)$ is said to have nonnegative quadratic orthogonal bisectional curvature (NQOB for short) if, at any point $x\in M$ and any unitary frame $\{e_i\}$,
$\sum_{i, j}R_{i\bar{i}j\bar{j}}(a_i-a_j)^2\ge 0$
for all real numbers $a_i$.
$(M^n, g)$ is said to have nonnegative orthogonal bisectional curvature (NOB for short) if for any orthogonal $(1, 0)$ vector fields $X, Y$, $R(X, \bar{X}, Y, \bar{Y})\ge 0$. The example constructed in \cite{HT} shows that the curvature condition (NOB) is stronger than (NQOB). On the other hand,  examples constructed in this paper show that the (NOB) is weaker than the nonnegativity of the bisectional curvature.

A natural question is whether or not the gap theorem remains true under the assumption of Theorem \ref{tm:NT}, or less ambitiously under  the nonnegativity of the orthogonal bisectional curvature, and the nonnegativity of the Ricci curvature.

Related to the gap theorem, a Liouville type theorem was proved in \cite{NT1} for plurisubharmonic functions. \begin{theorem}\label{tm:Liouville}
		Let $M$ be a complete noncompact K\"ahler manifold with nonnegative holomorphic bisectional curvature. Let $u$ be a continuous plurisubharmonic function on $M$. Suppose that
		\begin{equation}\label{eq:4}
		\lim_{x\rightarrow \infty} \frac{u(x)}{\log r(x)}=0.
		\end{equation}
		Then $u$ must be constant.
	\end{theorem}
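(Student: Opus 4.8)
The plan is to prove the Liouville theorem for plurisubharmonic functions under the weaker curvature hypothesis (NOB together with nonnegative Ricci curvature) by combining the heat equation deformation technique with the monotonicity formulas that are available under (NOB). First I would recall the basic strategy from \cite{NT1}: given a continuous plurisubharmonic function $u$ with $u(x)=o(\log r(x))$, regularize it (via mollification/heat flow) to a smooth plurisubharmonic function, so that $\rho:=\sqrt{-1}\partial\bar\partial u \ge 0$ is a closed nonnegative real $(1,1)$-form. The heat flow $\frac{\partial}{\partial t}\rho(t) = -\Delta_{\bar\partial}\rho(t)$ with $\rho(0)=\rho$ preserves nonnegativity of $(1,1)$-forms precisely when the curvature operator term in the Weitzenb\"ock-type formula is controlled; here is where (NOB) enters, exactly as it does in Theorem \ref{tm:NT} through the work of \cite{NT2}. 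The growth assumption $u(x)=o(\log r(x))$ should be shown to force $k_{\|\rho\|}(r)$ to decay fast enough — in fact, one expects $\int_0^\infty k_{\|\rho\|}(s)\,ds<\infty$ to fail in general, so the more delicate point is to run a dimension-reduction or a direct integral estimate on $\operatorname{tr}_\omega \rho = \Delta u$ rather than on $\|\rho\|$ itself.

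The key steps, in order, would be: (1) reduce to the smooth case by approximation, noting plurisubharmonicity is preserved and the logarithmic growth bound passes to the limit; (2) set $v(x,t)$ to be the solution of the scalar heat equation with initial data $\Delta u$ (equivalently $\operatorname{tr}_\omega \rho$), and use the fact that on a manifold with nonnegative Ricci curvature the heat kernel satisfies Li--Yau type bounds, so that the sublogarithmic growth of $u$ controls $v(x,t)$; (3) apply the monotonicity formula of \cite{Ni-JAMS}, \cite{NN} — valid under (NOB) — to the Hodge-heat deformation of $\rho$, showing that a suitable weighted integral quantity $\mathcal{Q}(t)$ is monotone and, because of the growth hypothesis, must be identically its limiting value; (4) conclude that $\rho(t)\equiv 0$ for all $t>0$, hence by the strong maximum principle (or by letting $t\to 0$) that $\rho \equiv 0$, i.e. $u$ is pluriharmonic; (5) upgrade pluriharmonic $+$ sublogarithmic growth to constancy using the mean value inequality for (pluri)subharmonic functions, or by applying the same Liouville statement to $\pm u$ which is now pluriharmonic, hence (locally) the real part of a holomorphic function, and invoking the sublogarithmic growth to kill it.

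I expect the main obstacle to be Step (3): under the full nonnegativity of the bisectional curvature (as in \cite{NT1}) one has the strongest form of the heat-kernel monotonicity and the pointwise differential inequalities of \cite{Ni-JAMS}, but under only (NOB) $+$ nonnegative Ricci the preservation of nonnegativity of $(1,1)$-forms along the Hodge-Laplacian heat flow, and the corresponding monotonicity, are exactly the technical heart — this is what \cite{NT2} established for the Poincar\'e--Lelong equation, and the task is to verify that the same machinery applies here without an auxiliary growth assumption on the curvature. A secondary difficulty is that (NOB), unlike nonnegative bisectional curvature, is not known to be preserved by Ricci flow, so any argument that implicitly wanted to evolve the metric must be replaced by a fixed-background-metric argument; fortunately the Hodge-heat deformation of forms does not move the metric, so this should be manageable. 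Finally, in Step (5), care is needed because the manifold need not have maximal volume growth, so one cannot directly invoke a gradient estimate forcing a pluriharmonic function of sublogarithmic growth to be constant; instead I would argue via the already-established part of the proof (that $\sqrt{-1}\partial\bar\partial u=0$) combined with a second application of the heat flow to $|\nabla u|^2$, or more cleanly by noting that once $u$ is pluriharmonic, $\sqrt{-1}\partial u$ is a holomorphic $(1,0)$-form of controlled growth whose norm is subharmonic, and then a mean-value argument finishes it.
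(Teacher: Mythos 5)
Your outline diverges from the paper at its crucial point, and the divergence is a genuine gap rather than an alternative route. Your step (3) rests on the claim that the monotonicity (Li--Yau--Hamilton type) estimates of \cite{Ni-JAMS}, \cite{NN} for the Hodge--Laplacian heat deformation of a $(1,1)$-form remain valid under (NOB); they do not --- those estimates use the full nonnegativity of the bisectional curvature, and the introduction of this very paper points out that this is exactly why the earlier proof in \cite{Ni-gap} cannot be transplanted to the (NOB) setting. The paper's proof replaces that monotonicity by a completely different mechanism: truncate, setting $u_c=\max\{u,c\}$ (so that the function is bounded below and satisfies the Gaussian-type bound needed for the heat-kernel construction), solve the scalar heat equation with initial data $u_c$, show via the tensor maximum principle for the Lichnerowicz equation (Theorem 2.1 of the paper, which needs only (NQOB)/(NOB) and $\operatorname{Ric}\ge 0$) that the complex Hessian $(v_c)_{\alpha\bar{\beta}}$ stays nonnegative, show that its null space is a parallel distribution (Bony/Brendle--Schoen strong maximum principle), split $M=M_1\times M_2$ by De Rham, and then rule out the factor $M_2$ on which the Hessian would be strictly positive by invoking Proposition 4.1 of \cite{Ni}: a plurisubharmonic function of sublogarithmic growth on a manifold with $\operatorname{Ric}\ge 0$ satisfies $(\partial\bar{\partial}p)^n=0$. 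Constancy then follows from the Cheng--Yau gradient estimate. Without the monotonicity, your steps (3)--(4) have no engine for concluding that the deformed form vanishes.

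Two further problems in your setup would need repair even if such a monotonicity were available. First, $u$ is only continuous, so $\sqrt{-1}\partial\bar{\partial}u$ is merely a positive current; there is no general global smoothing of plurisubharmonic functions on a K\"ahler manifold, and the ``heat-flow regularization'' you allude to is precisely the paper's scalar-flow-plus-tensor-maximum-principle argument, not an off-the-shelf mollification that can be quoted in step (1). Second, the hypothesis $u=o(\log r)$ gives no integral decay of $k_{\|\rho\|}$, so the existence theory and estimates of \cite{NT2} (Theorem 1.2 in the paper), which require $\int_0^\infty k_{\|\rho\|}(s)\,ds<\infty$, cannot be invoked to flow $\rho$ directly; you correctly anticipate that this decay fails in general, but the proposed fallback (a ``direct integral estimate on $\Delta u$'' or ``dimension reduction'') is not carried out, and that is exactly where the real work would lie. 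The paper's route through $u_c$, the preservation of plurisubharmonicity, the parallel null distribution, and the vanishing of the Monge--Amp\`ere measure sidesteps all of these issues; your step (5) (Cheng--Yau type conclusion once the Hessian vanishes) is the only part that matches the actual argument.
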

Very recently, using a partial maximum principle the same Liouville  result was proved \cite{Liu} for complete K\"ahler manifolds with nonnegative holomorphic sectional curvature. On the other hand, there exists an algebraic curvature (\cite{Tam}) which has positive holomorphic sectional curvature, positive orthogonal bisectional curvature (hence positive Ricci curvature), but with negative bisectional curvature for some pair of vectors. This indicate  that the (NOB) condition is in a sense independent to the nonnegativity of the holomorphic sectional curvature. Generalizing the Liouville theorem to manifolds with (NOB) becomes an interesting itself. Note that the solution constructed in Theorem \ref{tm:NT} above for $\rho$ being the Ricci form satisfies the estimate (\ref{eq:4}) if the scalar curvature satisfies the assumption (\ref{eq:1}).
In fact, the assumption (\ref{eq:1}) implies that $r\int_{2r}^\infty k_{\|\rho\|}(s) ds=o(1), \int_0^{2r} sk_{\|\rho\|}(s)ds=o(\log r)$.
Hence to obtain the gap theorem under the weaker assumptions of nonnegative orthogonal bisectional curvature and nonnegative Ricci curvature one only needs to prove the above Liouville theorem for the manifolds with  these weaker assumptions.

The main purpose of this paper is to prove Theorem \ref{tm:gap} and Theorem \ref{tm:Liouville} under the weaker assumptions of  nonnegative orthogonal bisectional curvature and nonnegative Ricci curvature. In fact  the following more general result is proved.

\begin{theorem}\label{thm:main}
Let $(M, g)$ be a complete K\"ahler manifold with nonnegative orthogonal bisectional curvature and nonnegative Ricci curvature. Assume that $\rho\ge 0$ is a smooth $d$-closed $(1,1)$-form.
Suppose that
\begin{equation}\label{ave-decay1}
\int_0^r s\aint_{B_{o}(s)}\|\rho\|(y)\, d\mu(y)\, ds =o(\log r)
\end{equation}
for some $o\in M$. Then $\rho\equiv 0$.
\end{theorem}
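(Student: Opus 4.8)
The plan is to reduce Theorem~\ref{thm:main} to a Liouville statement for plurisubharmonic functions, exactly as the discussion preceding the theorem suggests, and then to prove that Liouville statement under the (NOB) plus nonnegative Ricci assumptions by means of a heat-equation deformation argument. First I would apply Theorem~\ref{tm:NT}: the hypothesis \eqref{ave-decay1} says precisely that $\int_0^{2r} s k_{\|\rho\|}(s)\,ds = o(\log r)$, and since $k_{\|\rho\|}$ is integrable (being dominated by a convergent integral after the weight $s$ is inserted — more carefully, \eqref{ave-decay1} forces $\int_0^\infty k_{\|\rho\|}(s)\,ds<\infty$ and $r\int_{2r}^\infty k_{\|\rho\|}(s)\,ds=o(1)$, by the standard Kronecker-type lemma), Theorem~\ref{tm:NT} produces a smooth $u$ with $\rho=\sqrt{-1}\,\partial\bar\partial u$ and, via the upper bound in \eqref{eq:3}, $u(x)=o(\log r(x))$. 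Because $\rho\ge 0$, $u$ is plurisubharmonic. So it suffices to show: on a complete K\"ahler manifold with (NOB) and $\Ric\ge 0$, a continuous plurisubharmonic function $u$ with $u(x)/\log r(x)\to 0$ is constant; applying this gives $\sqrt{-1}\,\partial\bar\partial u\equiv 0$, i.e. $\rho\equiv 0$.

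For the Liouville step, the approach I would take is the one from \cite{NT1}, adapted to the weaker curvature hypothesis. Regularize $u$ (convolve along the heat flow, or mollify) so that $v:=\sqrt{-1}\,\partial\bar\partial u\ge 0$ is a genuine smooth closed nonnegative $(1,1)$-form, then run the Hodge--Laplacian heat flow $\partial_t \omega = -\Delta_d \omega$ with $\omega(0)=v$. The point of passing to this flow is the monotonicity/positivity-preservation: under (NOB) together with $\Ric\ge 0$, the relevant curvature operator acting on $(1,1)$-forms is of a type for which the maximum principle of Ni--Tam (the one underlying Theorem~\ref{tm:NT}, and of Ni \cite{Ni-JAMS}, \cite{NN}) keeps $\omega(t)\ge 0$ and controls $\operatorname{tr}_\omega \omega(t)$. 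One then estimates the solution using the heat kernel bounds valid under $\Ric\ge 0$ (Li--Yau), shows that the trace of $\omega(t)$ tends to $0$ as $t\to\infty$ at every point because the ``mass'' $k_{\|v\|}$ is integrable and the growth of $u$ is sublogarithmic, and concludes first that $\omega(t)\equiv\omega(0)$ by an energy/monotonicity argument, forcing $v$ to be harmonic, and then that $v\equiv 0$. Unwinding the regularization gives $u$ pluriharmonic, and a pluriharmonic function that is $o(\log r)$ on a manifold with $\Ric\ge 0$ (hence with at-least-linear volume growth and a mean value inequality) must be constant.

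The technical heart — and the step I expect to be the main obstacle — is verifying that the heat-flow positivity preservation and the associated monotonicity genuinely go through under (NOB) rather than under nonnegativity of the full bisectional curvature. In \cite{Ni-gap} the monotonicity of \cite{Ni-JAMS} is used, and that argument exploits the bisectional curvature in an essential way; the analogous Lichnerowicz-type identity for $\Delta_d$ acting on real $(1,1)$-forms produces a zeroth-order term which one needs to show is nonnegative whenever the form is nonnegative, and this is exactly an (NQOB)/(NOB)-type algebraic inequality on the curvature tensor — which is why Theorem~\ref{tm:NT} is stated under NQOB. So the real work is an algebraic/curvature-operator lemma: that on a manifold with (NOB) and $\Ric\ge0$ the cone of nonnegative real $(1,1)$-forms is preserved by the Hodge heat flow, together with the matching differential Harnack/monotone quantity. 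A secondary difficulty is making the regularization of the merely continuous plurisubharmonic $u$ compatible with the non-compact setting and the growth bound, so that the limiting argument returns information about $u$ itself; this is handled as in \cite{NT1}, \cite{NT2} by working with $\int_{B_o(r)}$ averages and the mean value inequality for subsolutions, which are available since $\Ric\ge 0$.
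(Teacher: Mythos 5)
Your reduction---check the hypotheses of Theorem \ref{tm:NT}, obtain a smooth plurisubharmonic potential $u$ with $\rho=\sqrt{-1}\p\bar{\p}u$ and $u=o(\log r)$, then invoke a Liouville theorem under (NOB) and $\Ric\ge 0$---is exactly the paper's skeleton, but the proposal has one quantitative slip and one genuine gap. The slip: your claim that (\ref{ave-decay1}) forces $r\int_{2r}^\infty k_{\|\rho\|}(s)\,ds=o(1)$ ``by a standard Kronecker-type lemma'' is unjustified and is false for a general nonnegative integrand (unit-width spikes of height $1/n_j$ at $n_j=e^{j^2}$ satisfy the hypothesis but violate the $o(1)$ tail bound). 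What is true, and all that the two-sided bound (\ref{eq:3}) needs to give $u=o(\log r)$, is the weaker estimate $r\int_{2r}^\infty k_{\|\rho\|}(s)\,ds=o(\log r)$; the paper proves precisely this (together with $k_{\|\rho\|}(r)=o(\log r/r^2)$, which also yields (\ref{eq:2})) by Bishop--Gromov volume comparison, comparing $k_{\|\rho\|}(s)$ with $k_{\|\rho\|}(r)$ for $s\in[\frac r2,r]$; one can alternatively integrate by parts against $F(s)=\int_0^s t\,k_{\|\rho\|}(t)\,dt=o(\log s)$. Either way this step must be argued, and in the correct $o(\log r)$ form.

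The genuine gap is the Liouville step. Theorem \ref{tm:Liouville} as quoted from \cite{NT1} assumes nonnegative bisectional curvature, so it must be re-proved under (NOB) and $\Ric\ge 0$; that is the main content of the paper, and your proposal does not supply it. What you sketch---evolve the form by the Hodge--Laplacian heat flow, show its trace decays in time, and conclude by ``an energy/monotonicity argument''---is the strategy of \cite{Ni-gap}, whose key monotonicity from \cite{Ni-JAMS} uses nonnegative bisectional curvature essentially; you yourself flag the required positivity-preservation/monotone quantity under (NOB) as the ``main obstacle'' and leave it as an unproved curvature lemma, so the argument is deferred exactly where the new work lies, and it is doubtful that this route goes through under (NOB) at all. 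The paper takes a different path that avoids any Harnack/monotonicity: it observes that the Ni--Tam maximum principle for Hermitian symmetric solutions of the Lichnerowicz heat equation (Theorem \ref{tm:MP}) needs only (NQOB)/(NOB) and $\Ric\ge0$, shows the null space of the evolved complex Hessian is a smooth parallel distribution (Corollary \ref{cor:MP}, Proposition \ref{prop:Mp2}, via the Bony/Brendle--Schoen strong maximum principle), splits $M=M_1\times M_2$ by De Rham, and then excludes the positive factor $M_2$ using Proposition 4.1 of \cite{Ni} (a plurisubharmonic function of $o(\log r)$ growth on a manifold with $\Ric\ge0$ has degenerate complex Hessian) together with the Cheng--Yau gradient estimate. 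Without either that argument or a proof of your postulated (NOB) heat-flow lemma, the theorem is not established.
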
	

Note that there exists an algebraic curvature (\cite{Tam}) which has positive holomorphic sectional curvature, positive orthogonal bisectional curvature (hence positive Ricci curvature), but with negative bisectional curvature for some pair of vectors.  Hence the Liouville type result can be viewed complementary to the case of \cite{Liu}. In the last part we show that the perturbation technique of Huang-Tam \cite{HT} based on the unitary construction of Wu-Zheng \cite{WZ} can be adapted to construct examples of complete K\"ahler metrics with unitary symmetry such that its curvature has (NOB), but not nonnegative bisectional curvature.

	\section{Proof of the Liouville and the gap theorems}
	
	In \cite{NT1}, the authors proved a Liouville type theorem for plurisubharmonic functions on a K\"ahler manifold with nonnegative holomorpihc bisectional curvature.  A key ingredient of the proof is a  maximum principle for Hemitian symmetric tensor satisfying the Lichnerowicz heat equation.  In fact, the same results still holds on a K\"ahler manifold with the weaker conditions of (NOB) and nonnegativity of the Ricci curvature. We first recall the maximum princple for Hermitian symmetric tensor $\eta(x, t)$ satisfying the Lichnerowicz heat equation
	\begin{equation}\label{eq:L}
	\left(\frac{\p}{\p t}-\Delta\right) \eta_{\alpha\bar{\beta}}=
		R_{\alpha\bar{\beta}\gamma\bar{\delta}}\eta_{\delta\bar{\gamma}}-\frac{1}{2}\left(R_{\alpha\bar{p}}\eta_{p\bar{\beta}}+R_{p\bar{\beta}}\eta_{\alpha\bar{p}}\right).
	\end{equation}
	
	\begin{theorem}\label{tm:MP}
	Let $(M^n, g)$ be a complete noncompact K\"ahler manifold with nonnegative holomorhphic orthogonal bisectional curvature and nonnegative Ricci curvature. Let $\eta(x, t)$ be a Hermitian symmetric $(1, 1)$ tensor satisfying (\ref{eq:L}) on $M\times [0, T]$ with $0<T<\frac{1}{40a}$ such that $\|\eta\|$ satisfies
	\begin{equation}\label{eq:14}
	\int_M \|\eta\|(x, 0)\exp(-ar^2(x))dx<+\infty,
	\end{equation}	
	and
	\begin{equation}\label{eq:15}
	\liminf_{r\rightarrow \infty}\int_0^T \int_{B_o(r)}\|\eta\|^2(x, t)\exp(-ar^2(x)) dx dt<+\infty.
	\end{equation}
	Suppose at $t=0$, $\eta_{\alpha\bar{\beta}}\ge -b g_{\alpha\bar{\beta}}$ for some constant $b\ge 0$. Then there exists $0<T_0<T$ depending only on $T$ and $a$ so that the following are ture:
	
	(i) $\eta_{\alpha\bar{\beta}}(x, t)\ge -bg_{\alpha\bar{\beta}}(x)$ for all $(x, t)\in M\times [0, T_0]$.
	
	(ii) For any $T_0>t'\ge 0$, suppose there is a point $x'$ in $M^n$ and there exist constants $\nu>0$ and $R>0$ such that the sum of the first $k$ eigenvalues $\lambda_1, \cdots, \lambda_k$ of $\eta_{\alpha\bar{\beta}}$ satisfies
	$$
	\lambda_1+\cdots+\lambda_k\ge -kb+\nu k\phi_{x', R}
	$$
	for all $x$ at time $t'$, where $\phi:[0, \infty)\rightarrow [0, 1]$ is a smooth cut-off function such that $\phi\equiv 1$ on $[0, 1]$ and $\phi\equiv 0 $ on $[2, \infty)$, $\phi_{x', R}(x)=\phi(\frac{d(x, x')}{R})$, the eigenvalues of $\eta$ are of accending order. Then for all $t>t'$ and for all $x\in M$, the sum of the first $k$ eigenvalues of $\eta_{\alpha\bar{\beta}}(x, t)$ satisfies
	$$
	\lambda_1+\cdots+\lambda_k\ge -kb+\nu k f_{x', R}(x, t-t'),
	$$
	where $f_{x', R}$ is the solution of
	$(\frac{\p}{\p t}-\Delta) f=-f$ with initial value $\phi_{x', R}(x).$
	\end{theorem}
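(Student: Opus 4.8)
The plan is to follow the by-now standard localization-plus-heat-kernel scheme used to establish Li--Yau--Hamilton type maximum principles for the Lichnerowicz heat equation on complete noncompact manifolds (as in \cite{NT1}), and to observe that at each point where the nonnegativity of the full bisectional curvature was invoked in that argument, one actually only needs the reaction term $R_{\alpha\bar\beta\gamma\bar\delta}\eta_{\delta\bar\gamma}-\frac12(R_{\alpha\bar p}\eta_{p\bar\beta}+R_{p\bar\beta}\eta_{\alpha\bar p})$ to be controlled along the ODE governing the bottom eigenvalues, which is exactly what (NOB) together with $\Ric\ge 0$ delivers. Concretely, for part (i) I would introduce, for small $\delta>0$ and a large parameter, the perturbed tensor $\tilde\eta_{\alpha\bar\beta}=\eta_{\alpha\bar\beta}+(b+\delta e^{Ct})g_{\alpha\bar\beta}+\delta\, h(x,t)\, g_{\alpha\bar\beta}$, where $h$ is a suitable space-time barrier (built from $r^2(x)$ and $t$) that grows at spatial infinity fast enough to dominate the Gaussian-type weight in (\ref{eq:14})--(\ref{eq:15}) but slowly enough to stay finite on $M\times[0,T_0]$; the point of the $\exp(-ar^2)$ integrability hypotheses and the restriction $T<\tfrac{1}{40a}$ is precisely to guarantee that such a barrier exists on a time interval $[0,T_0]$ and that the first would-be touching point of $\tilde\eta$ with $0$ occurs at a finite point in space, reducing matters to a local computation.

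At that first touching point $(x_0,t_0)$, working in a unitary frame diagonalizing $\eta$ and choosing the null eigenvector $v$ of $\tilde\eta$, the second-derivative test gives $\Delta\tilde\eta(v,\bar v)\ge 0$ and $\partial_t\tilde\eta(v,\bar v)\le 0$; feeding this into (\ref{eq:L}) and discarding the barrier terms (which only help, by the choice of $C$ and $h$) yields $0\ge -R_{v\bar v\gamma\bar\delta}\eta_{\delta\bar\gamma}+\tfrac12(R_{v\bar p}\eta_{p\bar v}+R_{p\bar v}\eta_{v\bar p})$ evaluated with $\eta(v,\bar v)=-b$. Writing $\eta+bg$ in diagonal form with nonnegative eigenvalues $\mu_j\ge 0$ (since we have not yet crossed $-b$ in any direction) and $\mu_{j_0}=0$ in the $v$-direction, the reaction term becomes $\sum_j R_{v\bar v e_j\bar e_j}\mu_j + b\,R_{v\bar v}$ up to the Ricci correction; the key algebraic fact is that $R_{v\bar v e_j\bar e_j}\ge 0$ whenever $e_j\perp v$, which is exactly (NOB), while the $j=j_0$ term has coefficient $\mu_{j_0}=0$ so the (possibly negative) holomorphic sectional curvature $R_{v\bar v v\bar v}$ never enters; combined with $\Ric\ge 0$ this forces the reaction term to be $\ge 0$, contradicting the strict decrease built in by the $\delta e^{Ct}$ term. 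Letting $\delta\to 0$ proves (i).

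For part (ii) I would apply the same mechanism not to single eigenvalues but to the sum $\sigma_k=\lambda_1+\cdots+\lambda_k$ of the bottom $k$ eigenvalues, using the variational characterization $\sigma_k=\inf\{\sum_{a=1}^k \eta(w_a,\bar w_a): \{w_a\}\ \text{orthonormal}\}$ so that $\sigma_k$ is a concave function of $\eta$ and hence a supersolution of the heat equation wherever the reaction term summed over an optimal $k$-frame is nonnegative. The localized barrier is now $-kb+\nu k f_{x',R}(x,t-t')$ with $f$ solving $(\partial_t-\Delta)f=-f$, $f(\cdot,0)=\phi_{x',R}$; the extra $-f$ precisely absorbs the Ricci correction $-\tfrac12(R_{\alpha\bar p}\eta_{p\bar\beta}+R_{p\bar\beta}\eta_{\alpha\bar p})$ (which is $O(\|\eta\|)$ and, after adding $kb$, $O(f)$), while the curvature term $\sum_{a=1}^k R_{w_a\bar w_a\gamma\bar\delta}\eta_{\delta\bar\gamma}$, expanded against the bottom eigenframe, is again a sum of terms $R_{w_a\bar w_a e_j\bar e_j}(\mu_j)$ in which the diagonal $a=j$ contributions cancel against the trace part and the off-diagonal ones are nonnegative by (NOB) on the orthogonal pairs $w_a\perp e_j$. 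One then runs the strong maximum principle / first-touching-time argument on $\sigma_k+kb-\nu k f_{x',R}$ exactly as in (i) to propagate the lower bound forward in time.

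I expect the main obstacle to be purely technical rather than conceptual: verifying that the a priori integrability hypotheses (\ref{eq:14})--(\ref{eq:15}) are genuinely strong enough to run Karp--Li / Ni--Tam style cutoff arguments so that the maximum is attained (or the relevant integrated quantity is controlled) on a compact set, and producing the space-time barrier $h$ with the sharp dependence $T_0=T_0(T,a)$ — this is where the constant $\tfrac{1}{40a}$ comes from and it has to be tracked carefully. The eigenvalue/reaction-term algebra, by contrast, is exactly the computation of \cite{NT1} with the single observation that only orthogonal bisectional curvatures and the Ricci tensor appear, so nonnegativity of the full bisectional curvature is never used.
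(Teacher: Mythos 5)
Your proposal follows essentially the same route as the paper's proof: the paper simply reruns the localization/barrier scheme of Ni--Tam's Theorem 2.1 in \cite{NT1} (heat-kernel comparison for $\|\eta\|$ using NQOB, the auxiliary functions $h_R$, $\tau(R)$, $\epsilon\phi$ and the damped barrier $f_{x',R}$, and the Gaussian integrability hypotheses with $T<\tfrac{1}{40a}$), observing that the only place curvature positivity enters the tensor argument is the eigenvalue-sum identity $\sum_{\alpha\le k}\sum_{\gamma>k}R_{\gamma\bar\gamma\alpha\bar\alpha}(\lambda_\gamma-\lambda_\alpha)\ge 0$, which needs only (NOB) and the ordering of eigenvalues --- exactly the algebraic observation at the heart of your sketch. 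Your two small misstatements (after perturbation the touching-direction eigenvalue of $\eta+bg$ is not exactly $0$, so one should diagonalize the perturbed tensor instead; and the $-f$ in $(\partial_t-\Delta)f=-f$ serves as slack in the barrier argument rather than ``absorbing the Ricci correction,'' which is already handled by the cancellation in the identity above) do not affect the viability of the argument.
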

	\begin{proof}
		The proof is to observe that the argument of Theorem 2.1 in \cite{NT1} only requires the nonnegativity of the orthogonal bisectional curvature and nonnegativity of the Ricci curvature. For the sake of the completeness we include some details of the argument here and pay special attention on the places where the nonnegativity of the orthogonal bisectional curvature is needed. By (\ref{eq:L}), one has
		\begin{eqnarray}
		(\frac{\p}{\p t}-\Delta)\|\eta\|^2
		&=&-\|\eta_{\alpha\bar{\beta}s}\|^2-\|\eta_{\alpha\bar{\beta}\bar{s}}\|^2+2R_{\alpha\bar{\beta}p\bar{q}}\eta_{q\bar{p}}\eta_{\beta\bar{\alpha}}-2R_{\alpha\bar{p}}\eta_{p\bar{\beta}}\eta_{\beta\bar{\alpha}}\nonumber\\
		&\le & -\|\eta_{\alpha\bar{\beta}s}\|^2-\|\eta_{\alpha\bar{\beta}\bar{s}}\|^2,
		\end{eqnarray}
		where we choose $\{e_\alpha\}$ so that $\eta_{\alpha\bar{\beta}}=\lambda_{\alpha}\delta_{\alpha\bar{\beta}}$. Thus
		\begin{eqnarray*}
		2R_{\alpha\bar{\beta}p\bar{q}}\eta_{q\bar{p}}\eta_{\beta\bar{\alpha}}-2R_{\alpha\bar{p}}\eta_{p\bar{\beta}}\eta_{\beta\bar{\alpha}}
		&=&2\left(\sum R_{\alpha\bar{\alpha}\beta\bar{\beta}}\lambda_{\alpha}\lambda_{\beta}-\sum_{\alpha} R_{\alpha\bar{\alpha}}\lambda_{\alpha}^2\right)\\
		&=&-\sum_{\alpha, \gamma} R_{\alpha\bar{\alpha}\beta\bar{\beta}}(\lambda_{\alpha}-\lambda_{\beta})^2\le 0,
	\end{eqnarray*}
provided that $(M, g)$ has nonnegative quadratic orthogonal bisectional curvature (which is a weaker condition than nonnegativity of the orthogonal bisectional curvature).

Combining with the inequality
$$
2|\nabla \|\eta\|^2|\le \|\eta_{\alpha\bar{\beta}}s\|^2+\|\eta_{\alpha\bar{\beta}\bar{s}}\|^2
$$
it implies  (as in \cite{NT1}) that
$$
(\frac{\p}{\p t}-\Delta) \|\eta\|\le 0.
$$
With Lemma 1.2 in \cite{NT1} which holds on the manifold with nonnegative Ricci curvature and (\ref{eq:14}), then
$$
h(x, t)=\int_M H(x, y, t)\|\eta\|(y) dy
$$
is a solution to the heat equation on $M\times [0, \frac{1}{40a}]$ with intial value $\|\eta\|(x)$. With the assumption (\ref{eq:15}) and Theorem 1.2 proved in \cite{NT3}, there exists $0<T_0<T$ such that $\|\eta\|(x, t)\le h(x, t)$ on $M\times [0, T_0]$.

For any $r_2>r_1$, let $A_o(r_1, r_2)$ denote the annulus $B_o(r_2)\setminus B_o(r_1)$. For any $R>0$, let $\sigma_R$ be the cut-off function which is 1 on $A_o(\frac{R}{4}, 4R)$ and 0 outside $A_o(\frac{R}{8}, 8R)$. We define
$$
h_R(x, t)=\int_M H(x, y, t)\sigma_R(y)\|\eta\|(y, 0)dy.
$$
Then $h_R$ satisfies the heat equation with initial value $\sigma_R\|\eta\|$. Moreover, Lemma 2.2 in \cite{NT1} holds when Ricci curvature is nonnegative. That is, there exists $0<T_0<T$  depending only on $a$ such that

(1) there exists a function $\tau=\tau(r)>0$ with $\lim_{r\rightarrow \infty} \tau(r)=0$ such that for all $R\ge \max\{\sqrt{T_0}, 1\}$ and for all $(x, t)\in A_o(\frac{R}{2}, 2R)\times [0, T_0]$,
	$$
	h(x, t)\le h_R(x, t)+\tau(R).
	$$
	
(2) For any $r>0$,
$$
\lim_{R\rightarrow \infty}\sup_{B_o(r)\times [0, T_0]} h_R=0.
$$

By Lemma 1.2 and Corollary 1.1 in \cite{NT1}, we can find a solution $\phi(x, t)$, $(\frac{\p}{\p t}-\Delta)\phi=\phi$ such that $\phi(x, t)\ge \exp(c(r^2(x)+1))$ for some $c>0$ for all $0\le t\le T$.

 As in \cite{NT1} we only need to prove (ii) by assuming (i) since the proof of (i) is similar, but easier. Without the loss of generality, we assume that $t'=0$ and there exist $x'\in M, \nu>0$ and $R_0>0$ such that the first $k$ eigenvalues $\lambda_1, \cdots, \lambda_k$ of $\eta_{\alpha\bar{\beta}}$ satisfy
$$
\lambda_1+\cdots+\lambda_k\ge -kb+\nu k\phi_{x', R_0}
$$
for all $x$ in $M$ at time $t=0$. For simplicity, we assume that $\nu=1$.

Let $\epsilon>0$, for any $R>0$, define $\psi(x, t, \epsilon, R)=-f_{x', R_0}(x, t)+\epsilon \phi(x, t)+h_R(x, t)+\tau(R)+b$ and let $(\eta_R)_{\alpha\bar{\beta}}=\eta_{\alpha\bar{\beta}}+\psi g_{\alpha\bar{\beta}}$. Then at $t=0$, at each point the sum of the first $k$ eigenvalues of $\eta_R$ is positive.  We want to prove that for any $T_0\ge t>0$ and $R>0$, the sum of the first $k$ eigenvalues of $\eta_R$ in $B_o(R)\times [0, T_0]$ is positive, provided $R$ is large enough.

Then one can argue by contradiction similarly as in the proof of Theorem 2.1 in \cite{NT1}. The only attention to pay is the proof of (2.14) in \cite{NT1}. That is, if $\eta$ has eigenvectors $v_p=\frac{\p}{\p z^p}$ for $1\le p\le n$ with eigenvalue $\lambda_p$,
\begin{eqnarray*}
&\,&\sum_{\alpha,\beta=1}^{k}[ R_{\delta\bar{\gamma}\alpha\bar{\beta}}
(\eta_{\gamma\bar{\delta}}+\psi g_{\gamma\bar{\delta}})-\frac{1}{2}
R_{\alpha\bar{p}}(\eta_{p\bar{\beta}}+\psi g_{p\bar{\beta}})\\
&\qquad&-\frac{1}{2}R_{p\bar{\beta}}(\eta_{\alpha\bar{p}}+\psi g_{\alpha\bar{p}})g^{\alpha\bar{\beta}}]\\
&=&\sum_{\alpha=1}^k\sum_{\gamma=1}^{n}R_{\gamma\bar{\gamma}\alpha\bar{\alpha}}\lambda_{\gamma}-\sum_{\alpha=1}^{k}R_{\alpha\bar{\alpha}}\lambda_{\alpha}\\
&=& \sum_{\alpha=1}^{k}\sum_{\gamma=1}^{n}R_{\gamma\bar{\gamma}\alpha\bar{\alpha}}\lambda_{\gamma}-\sum_{\alpha=1}^{k}\sum_{\gamma=1}^{n}R_{\gamma\bar{\gamma}\alpha\bar{\alpha}}\lambda_{\alpha}\\
&=&\sum_{\alpha=1}^k\sum_{\gamma=k+1}^{n}R_{\gamma
	\bar{\gamma}\alpha\bar{\alpha}}\lambda_{\gamma}-\sum_{\alpha=1}^k\sum_{\gamma=k+1}^{n}R_{\gamma\bar{\gamma}\alpha\bar{\alpha}}\lambda_{\alpha}\\
&=&\sum_{\alpha=1}^{k}\sum_{\gamma=k+1}^{n}R_{\gamma\bar{\gamma}\alpha\bar{\alpha}}(\lambda_{\gamma}-\lambda_{\alpha})\\
&\ge & 0
\end{eqnarray*}
where we only used the fact that $M$ has nonnegative orthogonal bisectional curvature and $\lambda_{\gamma}\ge \lambda_{\alpha}$ for $\gamma\ge \alpha$.

The rest of the proof is the same as that of Theorem 2.1 in \cite{NT1}.  \end{proof}

The next observation is that one still has the following corollary,  as Corollary 2.1 in \cite{NT1}, under the weaker condition (NOB).

\begin{corollary}\label{cor:MP}
	Let $M$ and $\eta$ be as in the Theorem \ref{tm:MP} with $b=0$. That is, $\eta(x, 0)\ge 0$ for all $x\in M$. Let $T_0$ be such that the conclusions of the theorem are true. For $0<t<T_0$, let
	$$
	\mathcal{K}(x, t)=\{w\in T_x^{1, 0}(M)|\, \eta_{\alpha\bar{\beta}}(x, t)w^\alpha=0,\,\mbox{for all}\, \beta\}
	$$
	be the null space of $\eta_{\alpha\bar{\beta}}$. Then there exists $0<T_1<T_0$ such that for any
	$0<t<T_1$, $\mathcal{K}(x, t)$ is a smooth  distribution on $M$.
\end{corollary}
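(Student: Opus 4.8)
The plan is to use parts (i) and (ii) of Theorem~\ref{tm:MP} with $b=0$ to show that, for all sufficiently small $t>0$, the rank of $\eta(\cdot,t)$ is independent of $x\in M$; once this is known, $\mathcal{K}(\cdot,t)$ is automatically a smooth distribution, since the kernel of a smooth Hermitian tensor of constant rank is a smooth subbundle.

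First I would invoke part (i): since $\eta(\cdot,0)\ge0$, it gives $\eta(x,t)\ge0$ on $M\times[0,T_0]$, so at every $(x,t)$ the ascending eigenvalues satisfy $0\le\lambda_1\le\cdots\le\lambda_n$ and $\operatorname{rank}\eta(x,t)=n-\dim\mathcal{K}(x,t)$. Next I would establish a forward propagation of rank: \emph{if at some time $t'\in(0,T_0)$ there is a point $x'$ with $\operatorname{rank}\eta(x',t')\ge n-k+1$, then $\operatorname{rank}\eta(x,t)\ge n-k+1$ for every $x\in M$ and every $t\in(t',T_0)$.} To prove this, note that $\operatorname{rank}\eta(x',t')\ge n-k+1$ is equivalent to $\lambda_k(x',t')>0$, hence to $\lambda_1+\cdots+\lambda_k>0$ at $(x',t')$; using continuity of eigenvalues in $x$ I would pick $R>0$ and $\delta>0$ with $\lambda_k(\cdot,t')\ge\delta$ on $B_{x'}(2R)$, and then, since $\eta(\cdot,t')\ge0$ and $0\le\phi_{x',R}\le1$ is supported in $B_{x'}(2R)$, the inequality $\lambda_1+\cdots+\lambda_k\ge\nu k\,\phi_{x',R}$ holds everywhere at time $t'$ with $\nu=\delta/k$. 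Part (ii) then gives $\lambda_1+\cdots+\lambda_k(x,t)\ge\nu k\,f_{x',R}(x,t-t')$ for $t>t'$, and $f_{x',R}(\cdot,t-t')>0$ on $M$ since its nonnegative, not identically zero initial datum $\phi_{x',R}$ becomes strictly positive instantaneously (heat-kernel positivity for $\heat f=-f$); hence $\lambda_k(x,t)>0$, i.e. $\operatorname{rank}\eta(x,t)\ge n-k+1$, everywhere.

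Taking $k=n+1-\max_{x}\operatorname{rank}\eta(x,t')$ in this statement, I would deduce $\min_x\operatorname{rank}\eta(x,t)\ge\max_x\operatorname{rank}\eta(x,t')$ for $0<t'<t<T_0$, so that $g(t):=\max_x\operatorname{rank}\eta(x,t)$ is nondecreasing on $(0,T_0)$ (the case $\max_x\operatorname{rank}\eta(\cdot,t')=0$ being trivial). Being integer valued, $g$ is constant, say $\equiv m_0$, on an interval $(0,T_1)$ for some $0<T_1<T_0$. For any $t\in(0,T_1)$, choosing $t'\in(0,t)$ gives $\min_x\operatorname{rank}\eta(x,t)\ge g(t')=m_0=g(t)=\max_x\operatorname{rank}\eta(x,t)$, whence $\operatorname{rank}\eta(\cdot,t)\equiv m_0$ and $\dim\mathcal{K}(x,t)\equiv n-m_0$ for $t\in(0,T_1)$. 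Finally, for fixed $t\in(0,T_1)$ the tensor $\eta(\cdot,t)$ is smooth in $x$ (for $t>0$, by parabolic regularity for (\ref{eq:L})) and has constant rank, so locally solving the constant-rank linear system $\eta_{\alpha\bar\beta}w^\alpha=0$ with the implicit function theorem produces a smooth local frame for $\mathcal{K}(\cdot,t)$, which is therefore a smooth distribution of rank $n-m_0$.

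I expect the main obstacle to be the forward propagation of rank — specifically, arranging the cut-off $\phi_{x',R}$ and the constant $\nu$ so that the hypothesis of part (ii) is literally met, together with the strict positivity $f_{x',R}(\cdot,s)>0$ for $s>0$ that forces the partial eigenvalue sums to be positive everywhere. The remaining points (monotonicity of $g$, constancy of the rank in $x$, and the passage from constant rank to a smooth subbundle) are routine.
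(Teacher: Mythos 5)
Your proposal is correct and follows essentially the same route as the paper, which proves this corollary exactly as Corollary 2.1 of \cite{NT1}: use part (i) of Theorem \ref{tm:MP} to get $\eta(\cdot,t)\ge 0$, then part (ii) with a cutoff $\phi_{x',R}$ and the strict positivity of $f_{x',R}$ for $t>t'$ to propagate positivity of the partial eigenvalue sums, forcing the rank to be constant in $x$ and nondecreasing in $t$, hence constant on some interval $(0,T_1)$, after which constant rank plus smoothness of $\eta$ makes $\mathcal{K}(x,t)$ a smooth distribution. The details you flag (choice of $\nu$ and $R$, positivity of $f_{x',R}$ via the heat kernel) are handled exactly as you propose, so there is nothing essentially new to add.
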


\begin{proposition}\label{prop:Mp2} Let $\eta(x, t)$ be a Hermitian symmetric tensor satisfying (\ref{eq:L}). Assume that $\eta(x, t)\ge 0$ on $M\times (0, T)$ and $M$ has nonnegative orthogonal
bisectional curvature. Then $\mathcal{K}(x, t)$ is invariant under parallel translation. In particular, if $M$ is simply-connected, there is a splitting $M=M_1\times M_2$ with $\eta$ being zero on $M_1$ and positive on $M_2$, and $M_i$ has nonnegative orthogonal bisectional curvature.
\end{proposition}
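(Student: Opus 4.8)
The plan is to combine the pointwise evolution inequality for $\|\eta\|^2$ derived in the proof of Theorem \ref{tm:MP} with a standard argument showing that the kernel of a nonnegative tensor satisfying the Lichnerowicz heat equation is parallel. First I would recall the Bochner-type identity
$$
\left(\frac{\p}{\p t}-\Delta\right)\|\eta\|^2 = -\|\eta_{\alpha\bar{\beta}s}\|^2-\|\eta_{\alpha\bar{\beta}\bar{s}}\|^2 + 2R_{\alpha\bar{\beta}p\bar{q}}\eta_{q\bar{p}}\eta_{\beta\bar{\alpha}}-2R_{\alpha\bar{p}}\eta_{p\bar{\beta}}\eta_{\beta\bar{\alpha}},
$$
and observe, as in Theorem \ref{tm:MP}, that under (NOB) together with $\eta\ge 0$ the curvature terms are $\le 0$: diagonalizing $\eta$ at a point so that $\eta_{\alpha\bar\beta}=\lambda_\alpha\delta_{\alpha\bar\beta}$ with all $\lambda_\alpha\ge 0$, the curvature contribution becomes $-\sum_{\alpha,\beta}R_{\alpha\bar\alpha\beta\bar\beta}(\lambda_\alpha-\lambda_\beta)^2\le 0$ by (NQOB), which is implied by (NOB).

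Next I would fix $t_0\in(0,T)$, a point $x_0\in M$ with a null vector $w\in\mathcal{K}(x_0,t_0)$, and a unit-speed geodesic $\gamma(s)$ emanating from $x_0$; let $W(s)$ be the parallel transport of $w$ along $\gamma$. The goal is to show $\eta_{\alpha\bar\beta}(\gamma(s),t_0)W^\alpha(s)\overline{W^\beta(s)}=0$ for all $s$. The key step is the first-variation / second-order analysis at a null point: writing $\phi(s)=\eta(W(s),\overline{W(s)})\ge 0$, one shows that at an interior zero $s_*$ one has $\phi'(s_*)=0$ (automatic since $\phi\ge0$ has a minimum there) and, crucially, that the covariant Hessian term $\nabla\nabla\eta$ contracted with $W$ cannot force $\phi$ to stay at zero unless all the relevant first covariant derivatives $\eta_{\alpha\bar\beta s}W^\alpha$ vanish. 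The cleanest route is the one used by Mok for the bisectional curvature case and adapted in \cite{NT1}: for each fixed $t$, the kernel $\mathcal{K}(x,t)$ (a smooth distribution by Corollary \ref{cor:MP} on $(0,T_1)$) is invariant under parallel transport because $\eta$ satisfies the null-vector-preserving strong maximum principle in the spatial variables — concretely, if $\eta(w,\bar w)=0$ at a point, then at that point $\eta_{\alpha\bar\beta s}w^\alpha=0$ and $\eta_{\alpha\bar\beta\bar s}w^\alpha=0$ for all directions (this follows by differentiating $\eta(V,\bar V)\ge0$ twice in $V$, using the nonnegativity of the curvature terms in \eqref{eq:L} to control the zeroth-order part), and then the ODE for $\eta(W(s),\overline{W(s)})$ along a geodesic, which involves only first covariant derivatives of $\eta$ at points where it vanishes, forces $\phi\equiv 0$.

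Once $\mathcal{K}(x,t_0)$ is parallel for $t_0$ in a suitable subinterval, the splitting is the standard de Rham/holonomy argument: $\mathcal{K}$ and its orthogonal complement $\mathcal{K}^\perp$ are both parallel, complex, integrable (integrability of $\mathcal{K}$ follows since it is parallel, and a parallel distribution on a Kähler manifold is automatically a holomorphic, integrable, complex subbundle), hence by the de Rham decomposition theorem a simply-connected $M$ splits isometrically and holomorphically as $M_1\times M_2$ with $T M_1=\mathcal{K}$, $T M_2=\mathcal{K}^\perp$. On $M_1$ the tensor $\eta$ vanishes by construction; on $M_2$ it is positive definite; and since (NOB) is preserved under taking the factors of a Riemannian (here Kähler) product — an orthogonal pair of $(1,0)$-vectors tangent to one factor has the same bisectional curvature computed in the factor as in the product, and mixed terms vanish — each $M_i$ inherits nonnegative orthogonal bisectional curvature. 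The main obstacle is the parallel-transport invariance of $\mathcal{K}$: making rigorous that the curvature terms in \eqref{eq:L} are exactly the ones killed by (NOB) at a null direction (so that no zeroth-order term can prevent $\phi$ from vanishing identically along the geodesic), which is precisely where the computation at the end of the proof of Theorem \ref{tm:MP} — showing $\sum_{\alpha=1}^k\sum_{\gamma=k+1}^n R_{\gamma\bar\gamma\alpha\bar\alpha}(\lambda_\gamma-\lambda_\alpha)\ge 0$ — is the relevant input, now applied with the eigenvalue gaps at the kernel directions.
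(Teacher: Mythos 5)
The central step---parallel-transport invariance of $\mathcal{K}(x,t)$---has a genuine gap. Your key claim, that at a point where $\eta(w,\bar w)=0$ one automatically has $\eta_{\alpha\bar\beta s}w^\alpha=0$ and $\eta_{\alpha\bar\beta\bar s}w^\alpha=0$ for all directions ``by differentiating $\eta(V,\bar V)\ge 0$ twice in $V$,'' is not correct as justified: varying the vector argument only reproduces the algebraic fact $\eta(w,\bar v)=0$ (Cauchy--Schwarz for a semipositive form) and gives no information on spatial covariant derivatives, and nonnegativity alone does not imply the asserted vanishing. For instance the family of nonnegative Hermitian forms $\eta(x)=\begin{pmatrix} x^{2} & x\\ x & 1\end{pmatrix}$ has null vector $e_1$ at $x=0$, yet $(\partial_x\eta)(e_1,\bar e_2)=1\neq 0$. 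What the first-derivative test actually yields at a zero is only $(\nabla\eta)(w,\bar w)=0$ together with a gradient bound of the type $|\nabla\,\eta(W,\bar W)|\le C\sqrt{\eta(W,\bar W)}$. Moreover $\phi(s)=\eta(W(s),\overline{W(s)})$ satisfies no closed ODE along the geodesic: $\phi''$ is the full second covariant derivative of $\eta$ in the single direction $\gamma'$, which is not controlled by the evolution equation (\ref{eq:L}); controlling it requires the time derivative together with the trace over \emph{all} spatial directions, i.e.\ precisely a strong maximum principle for a degenerate parabolic operator, which your sketch invokes by name but never establishes. Two smaller points: Corollary \ref{cor:MP} is not available here, since Proposition \ref{prop:Mp2} assumes only $\eta\ge 0$ and (\ref{eq:L}) and not the growth hypotheses of Theorem \ref{tm:MP}; and arguing via a smooth local section $W$ of $\mathcal{K}$ to kill the derivative terms is circular, because whether $\nabla W$ stays in the kernel is exactly what parallelism asserts (constant rank should come out of the invariance, not go into it).

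For comparison, the paper closes exactly this gap by lifting to the principal $\mathsf{U}(n)$-bundle: with $u(\mathfrak{e})=\eta(e_1,\bar e_1)$ and $\widetilde X_i$, $\widetilde Y$ the horizontal lifts, the Lichnerowicz equation gives $(\widetilde Y-\sum_i\widetilde X_i\widetilde X_i)u=\sum_{s\ge 2}R_{1\bar 1 s\bar s}(\eta_{s\bar s}-\eta_{1\bar 1})\ge -Ku$, using (NOB) and $\eta\ge 0$ only through $R_{1\bar 1 s\bar s}\eta_{s\bar s}\ge 0$; Bony's strong maximum principle in the Brendle--Schoen form (whose hypotheses permit precisely the $K|\nabla u|+Ku$ and Hessian-infimum error terms you were trying to dispose of by hand) then shows the zero set of $u$ is invariant along horizontal curves, which is parallel-transport invariance of $\mathcal{K}$; the de Rham splitting and the inheritance of (NOB) by the factors then go exactly as in your final paragraph, which is fine. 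Your curvature-sign observation is the right (NOB) input, but it is the easy part; to complete the proof you would need either this frame-bundle Bony argument or the original argument of \cite{NT1}.
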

One can modify the original argument in \cite{NT1} for this slightly more general result. On the other hand the strong maximum principle of Bony adapted by Brendle-Schoen \cite{BS} (cf. Bony \cite{Bo}) can be applied to obtain this result. In this case one formulates everything on the principle $\mathsf{U}(n)$-bundle. For any unitary frame $\mathfrak{e}=\{e_i\}_{i=1}^n$ define $u(\mathfrak{e})=\eta(e_1, e_{\bar{1}})$. Let $\widetilde{Y}$ be the horizontal lifting of the vector field $\frac{\partial}{\partial t}$ on $M\times (0, T)$. At $\mathfrak{e}$, let $\widetilde{X_i}$ be the horizontal lifting of $e_i$. Similar  computation as in \cite {BS} yields that  that
$$
\left(\widetilde{Y}-\sum \widetilde{X}_i\widetilde{X}_i\right)u=\sum_{s\ge 2} R_{1\bar{1}s\bar{s}} (\eta_{s\bar{s}}-\eta_{1\bar{1}})\ge -Ku,
$$
where $K$ is a local constant depending only on $M$. Here we have used that $R_{1\bar{1}s\bar{s}} \eta_{s\bar{s}}\ge 0$ for $s\ge 2$. This is enough to conclude that $\mathcal{K}(x, t)$ is invariant under the parallel translation. In fact the following slight general version of Bony's strong maximum principle holds.
\begin{theorem}[Bony, Brendle-Schoen] Let $\Omega$ be an open subset of $\mathbb{R}^n$. Let $\{X_i\}_{i=1}^{m}$ be smooth vector fields on $\Omega$. Assume that $u: \Omega\to \mathbb{R}$ is a nonnegative smooth function satisfying
$$
\sum_{i=1}^m (D^2u)(X_i, X_i)\le -K \min \{ 0, \inf_{|\xi|=1}(D^2 u)(\xi, \xi)\}+K|\nabla u|+Ku,
$$
with $K$ be a positive constant. Let $Z=\{x\, |\, u(x)=0\}$ be the zero set. Let $\gamma(s): [0, 1]\rightarrow \Omega$ be a smooth curve such that $\gamma(0)\in Z$ and $\gamma'(s)=\sum a_i(s) X_i(\gamma(s))$ with $a_i(s)$ being smooth functions. Then $\gamma(s)\in Z$ for all $s\in [0,1]$.
\end{theorem}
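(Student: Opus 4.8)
The plan is to run a Hopf-type comparison argument for the degenerate-elliptic operator
$$F(A):=\sum_i A(X_i,X_i)+K\min\{0,\inf_{|\xi|=1}A(\xi,\xi)\},$$
defined on symmetric bilinear forms $A$. This $F$ is monotone ($A\ge B\Rightarrow F(A)\ge F(B)$) and positively $1$-homogeneous, and the hypothesis on $u$ says exactly $F(D^2u)\le K|\nabla u|+Ku$. It suffices to prove the local statement that $u(\gamma(s_1))=0$ forces $u(\gamma(s))=0$ for $s$ in a right neighbourhood of $s_1$, since then $\{s:\gamma([0,s])\subset Z\}$ is both closed (continuity of $u$) and open in $[0,1]$, hence all of $[0,1]$. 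Suppose not. Choosing $s_0'$ slightly above $s_1$ with $u(\gamma(s_0'))>0$, letting $\sigma=\sup\{s\in[s_1,s_0']:u(\gamma(s))=0\}$ and replacing $\gamma$ by $\gamma(\cdot+\sigma)$ (still an admissible curve), we may assume $u(\gamma(0))=0$ and $u(\gamma(s))>0$ for all $s\in(0,s_0]$. Since $u\ge0$ and $u(\gamma(0))=0$, the interior point $\gamma(0)$ is a global minimum of $u$, so $\nabla u(\gamma(0))=0$ and $D^2u(\gamma(0))\ge0$; in particular $\tfrac{d}{dt}u(\gamma(t))|_{t=0}=0$, whence $u(\gamma(t))=O(t^2)$. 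Assume first that $\gamma'(0)\ne0$ (the case $\gamma'(0)=0$ with $\gamma$ not locally constant is deferred below). Shrinking $s_0$ we get $\gamma'(s)\ne0$ on $[0,s_0]$, and since $\gamma'(s)=\sum_ia_i(s)X_i(\gamma(s))$, Cauchy--Schwarz gives $\sum_i\langle X_i,\gamma'/|\gamma'|\rangle^2\ge|\gamma'|^2/\sum_ia_i^2\ge c_0>0$ there. Hence we may fix a thin tubular neighbourhood $N$ of $\gamma([0,s_0])$ with $\overline N\subset\Omega$ on which $\sum_i\langle X_i,e\rangle^2\ge c>0$ ($e$ the unit axial direction), and so that the end-disc $D_1:=\{s=s_0\}\cap N$ lies inside $\{u>0\}$.

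In Fermi-type coordinates $(s,y)$ on $N$ ($s$ the arclength along $\gamma$, $|y|<r_0$ transverse) set
$$h(s,y)=(e^{\Lambda s}-1)\,\chi(|y|/r_0),$$
where $\chi:[0,1]\to[0,1]$ is a fixed cutoff, $\chi\equiv1$ near $0$ and $\chi\equiv0$ near $1$, and $\Lambda$ is large. Then $h\ge0$, $h=0$ on the lateral boundary of $N$ and on the start-disc $\{s=0\}$, and $h>0$ on the interior of $N$ and on $D_1$. The Hessian of $h$ is $\Lambda^2 e^{\Lambda s}\,e\otimes e+O(\Lambda e^{\Lambda s}r_0^{-2})$, so $\min\{0,\inf_{|\xi|=1}(D^2h)(\xi,\xi)\}\ge -C\Lambda e^{\Lambda s}r_0^{-2}$, and using $\sum_i\langle X_i,e\rangle^2\ge c$ together with $1$-homogeneity of $F$ one gets $F(D^2h)\ge c\Lambda^2e^{\Lambda s}-C_1\Lambda e^{\Lambda s}r_0^{-2}$, while $K|\nabla h|+Kh\le C_2\Lambda e^{\Lambda s}$. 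Hence, once $\Lambda$ is chosen large enough (in terms of $c,r_0,K$ and the $C^2$-data of $N$ and the $X_i$), $h$ is a strict subsolution: $F(D^2h)>K|\nabla h|+Kh$ throughout $N$.

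Now fix $\varepsilon>0$ with $\varepsilon\sup_N h<\min_{D_1}u$, and put $w=u-\varepsilon h$. On $\partial N$ one has $w\ge0$: on the lateral boundary and the start-disc $h=0$, so $w=u\ge0$, and on $D_1$, $w\ge\min_{D_1}u-\varepsilon\sup_N h>0$. On the other hand $w$ is negative at an interior point: at $\gamma(\tau)$ for small $\tau>0$, $h(\gamma(\tau))=e^{\Lambda\tau}-1\sim\Lambda\tau$ while $u(\gamma(\tau))=O(\tau^2)$, so $u(\gamma(\tau))<\varepsilon h(\gamma(\tau))$. Therefore $w$ attains a negative minimum at some interior $q\in N$, where $\nabla u(q)=\varepsilon\nabla h(q)$, $D^2u(q)\ge\varepsilon D^2h(q)$, and $0\le u(q)<\varepsilon h(q)$. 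Monotonicity and $1$-homogeneity of $F$, the inequality $F(D^2u(q))\le K|\nabla u(q)|+Ku(q)$, and the strict subsolution property give
$$\varepsilon\big(K|\nabla h(q)|+Kh(q)\big)\;<\;\varepsilon\,F(D^2h(q))\;\le\;F(D^2u(q))\;\le\;K|\nabla u(q)|+Ku(q)\;=\;\varepsilon K|\nabla h(q)|+Ku(q),$$
so $\varepsilon h(q)<u(q)$, contradicting $u(q)<\varepsilon h(q)$. This contradiction proves the local statement, hence the theorem, modulo the degenerate case.

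The step I expect to be the genuine obstacle is precisely that degenerate case, where the first zero $\gamma(s_1)$ along the curve has $\gamma'(s_1)=0$ while $\gamma$ is not locally constant there: then no tube whose axial direction is spanned by the $X_i$ is available near $\gamma(s_1)$, and the barrier above collapses. This is handled as in Bony and in Brendle--Schoen, for instance by first establishing the statement for admissible curves with nowhere-vanishing speed and then approximating a general admissible curve uniformly by such curves along which the invariance of $Z$ is already known, or by reparametrising $\gamma$ near $s_1$ to make the tangent non-degenerate; either route requires some care with the set of ``bad'' parameters. The only other bookkeeping cost is the term $\min\{0,\inf_{|\xi|=1}(D^2u)(\xi,\xi)\}$: it is exactly what rules out spherical barriers (which are concave in some direction) and forces $h$ to be convex in the axial direction with only mildly negative transverse Hessian, which is why the product form $(e^{\Lambda s}-1)\chi(|y|/r_0)$ with $\Lambda$ large is used.
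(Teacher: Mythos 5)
The paper itself does not prove this statement; it quotes it from Bony and Brendle--Schoen, whose argument runs through a Hopf-type boundary point lemma at a zero of $u$ admitting a tangent ball contained in $\{u>0\}$ (forcing $\langle X_i,\nu\rangle=0$ there, since otherwise the classical barrier $e^{-\lambda|x-x_c|^2}-e^{-\lambda\rho^2}$ would make $|\nabla u|\neq 0$ at an interior minimum), followed by a Gronwall estimate on $s\mapsto d\bigl(\gamma(s),Z\bigr)^2$. Your tube-barrier route is a genuinely different strategy, and its comparison step at the interior minimum (monotonicity and $1$-homogeneity of $F$) is fine, but as written it has a real gap exactly where the barrier is built. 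For $h(s,y)=(e^{\Lambda s}-1)\chi(|y|/r_0)$ the axial second derivative is $\Lambda^2e^{\Lambda s}\chi$, not $\Lambda^2e^{\Lambda s}$: the cutoff multiplies the good term. Near the lateral boundary, where $\chi$ is small but $\chi'$, $\chi''$ are of order one, the negative transverse eigenvalue of $D^2h$ and $|\nabla h|$ are of size $e^{\Lambda s}r_0^{-2}$ and $e^{\Lambda s}r_0^{-1}$ respectively, while the good term is only $c\Lambda^2e^{\Lambda s}\chi$; no choice of $\Lambda$ dominates these at points where $\chi$ is arbitrarily small (a nonnegative cutoff with $|\chi''|\le C\chi$ is identically zero), so the claim that $h$ is a strict subsolution \emph{throughout} $N$ is false. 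Trying to salvage it by using the inequality only at the minimum point $q$ is circular: the lower bound one gets for $\chi(q)$ is of order $\varepsilon\Lambda\tau e^{-\Lambda s_0}$ with $\varepsilon,\tau$ chosen after $\Lambda$, and this decays in $\Lambda$ faster than the required threshold $\sim\Lambda^{-2}r_0^{-2}$. The scheme can be repaired, but with a different barrier — e.g.\ $h=e^{\Lambda s-\mu|y|^2}-1$ on a short tube with $s_0\ll r_0$, whose own zero level set replaces the cutoff, so that all error terms carry the same exponential weight as the good term — and this needs to be carried out, not asserted.

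The second acknowledged gap, the case $\gamma'(s_1)=0$ at the first zero, is also not a removable footnote in your set-up: the set where $\gamma'$ vanishes can accumulate at the touching parameter, and approximating by admissible curves of nowhere-vanishing speed is not automatic (it may be impossible if the $X_i$ degenerate along the curve) nor is it explained how the approximation preserves admissibility. There are standard ways out — the cleanest is to pass to $\Omega\times\mathbb{R}$ with the fields $\widetilde X_i=(X_i,0)$, $\widetilde X_0=(0,1)$ and $\widetilde u(x,t)=u(x)$, which satisfies the same inequality, and apply the non-degenerate case to the lifted curve $(\gamma(s),s)$, whose speed never vanishes; alternatively Bony's distance-function Gronwall argument is insensitive to vanishing speed altogether. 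Without one of these, your proof covers only curves with $\gamma'\neq 0$, which is strictly weaker than the statement. In short: viable alternative strategy, but both the subsolution claim for your specific barrier and the degenerate-speed reduction need to be fixed before this constitutes a proof.
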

Since $K(x, t)$ is invariant under the parallel translation and clearly it is a clear subspace, the decomposition follows from the De Rham decomposition theorem.

	{\it Proof of Theorem \ref{tm:Liouville} under (NOB) and the nonnegativity of the Ricci curvature.}
		The proof is similar to that of Theorem 3.2 in \cite{NT1}.
		
		Without loss of generality, we may assume that $M$ is simply connected (by lifting the function to the universal cover, the growth condition clearly is preserved). For any fixed constant $c,$ we let $u_c=\max\{u, c\}$. It is well-known that $u_c$ is plurisubharmonic and $u_c$ satisfies
		\begin{equation}\label{eq:13}
		|u_c(x)|\le C\exp(ar^2(x)),
		\end{equation}
		for some constant $C>0$ and $a>0$. By adding a constant, we can also assume that $u_c\ge 0$. Then $u_c$ is a nonnegative continuous plurisubharmonic function satisfying (\ref{eq:13}).
		Now we consider the heat equation
		\[\begin{cases}
			&(\frac{\p}{\p t}-\Delta)v_c(x, t)=0,\\
			& \qquad v_c(x, 0)=u_c(x).
		\end{cases}\]
		Then the above Dirichlet boundary problem has a solution $v_c(x, t)$ on $M\times [0, \frac{1}{40a}]$, obtained in Lemma 1.2 in \cite{NT1} which holds on the manifold with nonnegative Ricci curvature.
		
	  Since Theorem \ref{tm:MP} and Corollary \ref{cor:MP} (and Proposition  \ref{prop:Mp2}) hold on the manifold with (NOB) and nonnegative Ricci curvature, one can go through the proof of Theorem 3.1 in \cite{NT1}, to see that there exists $0<T_0<T$ such that $v_c(x, t)$ is a smooth plursubharmonic function on $M\times (0, T_0]$.
      Moreover there exists $0<T_1<T_0$, such that the null space of $(v_c)_{\alpha\bar{\beta}}(x, t)$
%
		$$
			\mathcal{K}(x, t)=\{w\in T_x^{1, 0}(M)| (v_c)_{ \alpha\bar{\beta}}w^\alpha=0,\,\mbox{for all} \quad \beta\}
			$$
		 is a distribution on $M$ for any $0<t<T_1$. Moreover, the distribution is invariant under parallel translation. Then by Proposition \ref{prop:Mp2},
for any $t_0>0$ small enough, $M=M_1\times M_2$ isometrically and holomorphically such that  when restricted on $M_1$, $(v_c)_{\alpha\bar{\beta}}$ is zero,  and $(v_c)_{\alpha\bar{\beta}}$ is positive everywhere when restricted on $M_2$ by the De Rham decomposition. We want to conclude that $M_2$ factor does not exist. By Corollary 1.1 in \cite{NT1}, we have
		\begin{equation}\label{eq:5}
		\limsup_{x\rightarrow \infty} \frac{v_c(x, t_0)}{\log r(x)} =0.
		\end{equation}
		Hence when restricted on $M_2$ (if the factor exists), (\ref{eq:5}) still holds. This contradicts with the fact that $(v_c)_{\alpha\bar{\beta}}$ is positive when restriced on $M_2$,  since by  Propostion 4.1 of \cite{Ni}, which asserts  that if a plurisubharmonic function $p(x)$ on a K\"ahler manifold with nonnegative Ricci curvature  satisfies the growth condition (\ref{eq:5}), then $(\p\bar{\p}p)^n=0$, where $n$ is the complex-dimension of the manifold. Hence $(v_c)_{k\bar{l}}(x, t_0)\equiv 0$ on $M$ for all $t_0$ small enough. By the gradient estimate of Cheng-Yau \cite{CY} and (\ref{eq:5}), we can conclude that $v_c(x, t_0)$ is a constant, provided $t_0$ is small enough. Hence $u_c$ is a constant. Since $c$ is arbitrary, it shows that $u(x)$ is also a constant.

{\it Proof of Theorem \ref{tm:gap} under the assumptions of (NOB) and nonnegativity of the Ricci curvature. } Let $\rho$ be the Ricci form, which is a smooth nonnnegative closed real $(1, 1)$-form on $M$. It is easy to check that for any $y\in M$,
$$
\|\rho\|(y)\le S(y)\le \sqrt{n}\|\rho\|(y).
$$
Then
\begin{equation}\label{eq:12}
k_{\|\rho\|}(r)=o(r^{-2})
\end{equation}
and
(\ref{eq:2}) follows when (\ref{eq:1}) holds for some fixed point $o\in M$. Since the curvature condition (NOB) is stronger than (NQOB),
  Theorem \ref{tm:NT} still holds with the assumptions of (NOB) and nonnegativity of the Ricci curvature. Moreover, the solution $u$ to the Poincar\'e-Lelong equation $\rho=\sqrt{-1}\p\bar{\p}{u}$ is a plurisubharmonic function and (\ref{eq:3}) holds.
  In fact, (\ref{eq:3}) implies that

   \begin{equation}
   \lim_{x\rightarrow \infty} \frac{u(x)}{\log r(x)}=0，
   \end{equation}
since (\ref{eq:12}) implies that 
$$\int_{2r}^\infty k_{\|\rho\|}(s) ds=o(r^{-1}),\quad \int_0^{2r} s k_{\|\rho\|}(s) ds=o(\log r), \, \mbox{ and } \int_0^{\epsilon r} s k_{\|\rho\|}(s) ds=o(\log r).$$

   By the generalization of the Liuoville theorem proved above, we conclude that $u$ must be constant. This implies that $Ric\equiv 0$. For any unitary frame $\{e_\alpha\}_{\alpha=1}^n$, (NOB) implies that for any $\alpha\neq \beta$, by considering  $\tilde{e}_\alpha=\frac{1}{2}(e_{\alpha}-e_\beta),\, \tilde{e}_\beta=\frac{1}{2}(e_{\alpha}-e_\beta)$, 
  $$
  R(\tilde{e}_\alpha, \bar{\tilde{e}}_\alpha, \tilde{e}_\beta,\bar{\tilde{e}}_\beta)\ge 0,
  $$
  which is equivalent to that
  \begin{equation}\label{eq:6}
  R_{\alpha\bar{\alpha}\alpha\bar{\alpha}}+R_{\beta\bar{\beta}\beta\bar{\beta}}-R_{\alpha\bar{\beta}\alpha\bar{\beta}}-R_{\beta\bar{\alpha}\beta\bar{\alpha}}\ge 0.
  \end{equation}
  If we replace $e_\beta$ by $\sqrt{-1}e_\beta$ in $\tilde{e}_\alpha,$ and $ \tilde{e}_\beta$,
  \begin{equation}\label{eq:7}
  R_{\alpha\bar{\alpha}\alpha\bar{\alpha}}+R_{\beta\bar{\beta}\beta\bar{\beta}}+R_{\alpha\bar{\beta}\alpha\bar{\beta}}+R_{\beta\bar{\alpha}\beta\bar{\alpha}}\ge 0.
  \end{equation}

  By summing (\ref{eq:6}) and (\ref{eq:7}), we  obtain the following inequality
  $$
  R_{\alpha\bar{\alpha}\alpha\bar{\alpha}}+R_{\beta\bar{\beta}\beta\bar{\beta}}\ge 0.
  $$
  But
  \begin{eqnarray*}
  	 R_{\alpha\bar{\alpha}\beta\bar{\beta}}&\ge&  0,\\
R_{\alpha\bar{\alpha}}&=&R_{\alpha\bar{\alpha}\alpha\bar{\alpha}}+\sum_{\gamma\neq \alpha} R_{\alpha\bar{\alpha}\gamma\bar{\gamma}}=0,\\
R_{\beta\bar{\beta}}&=&R_{\beta\bar{\beta}\beta\bar{\beta}}+\sum_{\gamma\neq \beta} R_{\beta\bar{\beta}\gamma\bar{\gamma}}=0,\\
  \end{eqnarray*}
  Then $R_{\alpha\bar{\alpha}\alpha\bar{\alpha}}=0$ for any $\alpha$, which implies that $M$ is flat and the generalization of Theorem \ref{tm:gap} follows.

{\it Proof of Theorem \ref{thm:main}}. By Theorem \ref{tm:NT}, Theorem \ref{tm:Liouville}, it suffices to establish the estimate:
\begin{equation}\label{eq:help11}
r\int_{2r}^\infty k_{\|\rho\|}(s)\, ds =o(\log r).
\end{equation}
From (\ref{ave-decay1}), we know $\int_{\frac{1}{2}r}^{r} sk_{\|\rho\|}(s) ds=o(\log r)$. For any $\frac{1}{2}r\le s\le r$, by volume comparison, 
\begin{eqnarray}
&\,& 2^{-2n} k_{\|\rho\|}(\frac{r}{2})\le
\frac{Vol(B_o(\frac{r}{2}))}{Vol(B_o(r))}k_{\|\rho\|}(\frac{2}{r})\nonumber\\
&\le&
k_{\|\rho\|}(s)=\frac{1}{Vol(B_o(s))}\int_{B_o(s)}\|\rho\|(y)d\mu(y)\nonumber\\
&\le& \frac{Vol(B_o(r))}{Vol(B_o(\frac{r}{2}))}k_{\|\rho\|}(r)\le 2^{2n}k_{\|\rho\|}(r).\nonumber
\end{eqnarray}
From this and $\int_{\frac{r}{2}}^{r} sk_{\|\rho\|}(s) ds=o(\log r)$, we derive 
$$
k_{\|\rho\|}(r)=o\left(\frac{\log r}{r^2}\right).
$$
This implies (\ref{eq:help11}), for
\begin{equation}
\lim_{r\rightarrow \infty} \frac{\int_{2r}^\infty k_{\|\rho\|}(s) d s}{r^{-1}\log r}
=2\lim_{r\rightarrow \infty} \frac{r^2 k_{\|\rho\|}(2r)}{\log r-1}=0.\nonumber
\end{equation}

  \section{Examples}

In \cite{WZ}, Wu and Zheng  considered the $\mathsf{U}(n)$-invariant K\"ahler metrics on $\mathbb{C}^n$ and obtained necessary and sufficient conditions for the nonnegativity of the curvature operator, nonnegativity of the  sectional curvature, as well as the  nonnegativity of the  bisectional curvature respectively.
  In \cite{YZ}, Yang and Zheng later proved that the necessary and sufficient condition in \cite{WZ} for the nonnegativity of the  sectional curvature  holds for the nonnegativity of the complex sectional curvature under the unitary symmetry. In \cite{HT}, the authors  obtained the necessary and sufficient conditions for (NOB) and (NQOB) respectively. Moreover  they constructed a $\mathsf{U}(n)$-invariant K\"ahler metric on $\mathbb{C}^n$,  which is of  (NQOB), but does not have (NOB) nor nonnegativity of the Ricci curvature. In this section, we will construct a $\mathsf{U}(n)$-invariant K\"ahler metric on $\mathbb{C}^n$ which has (NOB) but does not have nonnegative bisectional curvature. The existence of such metric was pointed out in Remark 4.1 of \cite{HT}, and the construction below is a modification of the perturbation construction therein.

  We follow the same notations as in \cite{WZ, YZ}. 
  Let $(z_1, \cdots, z_n)$ be the standard coordinate on $\mathbb{C}^n$ and $r=|z|^2$. A $\mathsf{U}(n)$-invariant metric on $\mathbb{C}^n$ has the K\"ahler form
  \begin{equation}
  \omega=\frac{\sqrt{-1}}{2}\p\bar{\p} P(r)
  \end{equation}
  where $P\in C^\infty\left([0, +\infty)\right)$. Under the local coordinates, the metric has the components:
  \begin{equation}\label{eq:g}
  g_{i\bar{j}}=f(r)\delta_{ij}+f'(r)\bar{z}_i z_j.
  \end{equation}
  We further denote:
  \begin{equation}\label{eq:g1}
  f(r)=P'(r), \quad h(r)=(rf)'.
  \end{equation}
It is easy to check that $\omega$ will give a complete K\"ahler metric on $\mathbb{C}^n$ if and only if
  \begin{equation}\label{eq:10}
  f>0, \, h>0, \, \int_0^\infty \frac{\sqrt{h}}{\sqrt{r}}dr=+\infty.
  \end{equation}
  If $h>0$, then $\xi=-\frac{rh'}{h}$ is a smooth function on $[0, \infty)$ with $\xi(0)=0$. On the other hand, if $\xi$ is a smooth function on $[0, \infty)$ with $\xi(0)=0$, one can  define $h(r)=\exp(-\int_0^r \frac{\xi(s)}{s}ds)$ and $f(r)=\frac{1}{r}\int_0^r h(s)$ ds with $h(0)=1$. It is easy to see that $\xi(r)=-\frac{rh'}{h}$.
  Then (\ref{eq:g}) defines a $\mathsf{U}(n)$-invariant K\"ahler metric on $\mathbb{C}^n$.

   The components of the curvature operator of a $\mathsf{U}(n)$-invariant K\"ahler metric under the orthonormal frame $\{e_1=\frac{1}{\sqrt{h}}\p_{z_1}, e_2=\frac{1}{\sqrt{f}}\p_{z_2}, \cdots, e_n=\frac{1}{\sqrt{f}} \p_{z_n}\}$ at $(z_1, 0, \cdots, 0)$  are given as follows, see \cite{WZ}:
  \begin{eqnarray}
  	A&=& R_{1\bar{1}1\bar{1}}=-\frac{1}{h}\left(\frac{rh'}{h}\right)'=\frac{\xi'}{h}; \label{eq:A}\\
  	B&=& R_{1\bar{1}i\bar{i}}=\frac{f'}{f^2}-\frac{h'}{hf}=\frac{1}{(rf)^2}\left[rh-(1-\xi)\int_0^r h(s)\,  ds\right],\,  i\ge 2;\label{eq:B}\\
  	C&=& R_{i\bar{i}i\bar{i}}=2R_{i\bar{i}j\bar{j}}=-\frac{2f'}{f^2}=\frac{2}{(rf)^2}\left(\int_0^r h(s)\,  ds-rh\right),\, i\neq j, i, j\ge 2.\label{eq:C}
  \end{eqnarray}
   The other components of the curvature tensor are zero, except those obtained by the symmetric properties of curvature tensor.

   The following result was  proved in \cite{WZ}, which plays an important role in the construction.
   \begin{theorem}[Wu-Zheng]
   	(1) If $0<\xi<1$ on $(0, \infty)$, then $g$ is complete.
   	
   	(2) $g$ is complete and has positive  bisectional curvature if and only if $\xi'>0$ and $0<\xi<1$ on $(0, \infty)$, where $\xi'>0$ is equivalent to $A>0, B>0$ and $C>0$.
   	
   	(3) Every complete $\mathsf{U}(n)$-invariant K\"ahler metric on $\mathbb{C}^n$ with positive bisectional curvature is given by a smooth function $\xi$ in (2).
   \end{theorem}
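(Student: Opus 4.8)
The plan is to take the three parts in order; parts (1) and (3) are bookkeeping around the explicit formulas (\ref{eq:g})--(\ref{eq:C}) and the completeness criterion (\ref{eq:10}), and the real content sits in the curvature analysis of (2). For (1): write $h(r)=\exp\!\big(-\int_0^r \tfrac{\xi(s)}{s}\,ds\big)$ and $f(r)=\tfrac1r\int_0^r h(s)\,ds$. Being the exponential of a real quantity, $h>0$ automatically, and then $f>0$ as well, so the first two conditions in (\ref{eq:10}) hold for free. Smoothness of $\xi$ with $\xi(0)=0$ makes $\tfrac{\xi(s)}{s}$ bounded near $0$, while $\xi<1$ gives $\int_1^r \tfrac{\xi(s)}{s}\,ds<\log r$; hence $h(r)\ge c\,r^{-1}$ for $r\ge 1$ and some $c>0$, so $\int^\infty\sqrt{h/r}\,dr\ge\sqrt c\int^\infty\tfrac{dr}{r}=\infty$, giving the third condition, so $g$ is complete.

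For (2) I first dispose of the algebraic equivalence $\xi'>0\iff A>0,B>0,C>0$. From (\ref{eq:A}), $A=\xi'/h$ with $h>0$, so $A>0\iff\xi'>0$; conversely $A,B,C>0$ trivially gives $A>0$. It remains to see $\xi'>0$ forces $B,C>0$: assuming $\xi'>0$ we have $\xi>0$ on $(0,\infty)$ (integrate from $\xi(0)=0$), and setting $\phi(r)=\int_0^r h-rh$, $\psi(r)=rh-(1-\xi)\int_0^r h$ — the numerators of $C$ and $B$ in (\ref{eq:C}), (\ref{eq:B}), up to the positive factor $(rf)^{-2}$ — one computes, using $rh'=-\xi h$, that $\phi(0)=\psi(0)=0$, $\phi'=\xi h>0$ and $\psi'=\xi'\!\int_0^r h>0$, so $\phi,\psi>0$ on $(0,\infty)$, i.e. $C>0$ and $B>0$.

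Next comes the heart of (2): positivity of holomorphic bisectional curvature $\iff A,B,C>0$. Working at $p=(z_1,0,\dots,0)$, I would use the listed curvature symmetries together with the $\mathsf{U}(n-1)$ isotropy at $p$ to fix all components — in particular $R_{1\bar1 k\bar l}=B\delta_{kl}$, $R_{1\bar j k\bar 1}=B\delta_{jk}$, $R_{i\bar 1 1\bar l}=B\delta_{il}$, $R_{i\bar j k\bar l}=\tfrac C2(\delta_{ij}\delta_{kl}+\delta_{il}\delta_{kj})$ for indices $\ge2$, and the remaining pieces such as $R_{1\bar j 1\bar l}$, $R_{i\bar1 k\bar1}$ vanishing — and then expand, for $X=x_1e_1+X'$ and $Y=y_1e_1+Y'$ with $X',Y'\in\operatorname{span}(e_2,\dots,e_n)$,
\[
R(X,\bar X,Y,\bar Y)=A|x_1|^2|y_1|^2+B\big(|x_1|^2|Y'|^2+|y_1|^2|X'|^2+2\operatorname{Re}(\bar x_1 y_1\langle X',Y'\rangle)\big)+\tfrac C2\big(|X'|^2|Y'|^2+|\langle X',Y'\rangle|^2\big),
\]
where $\langle X',Y'\rangle=\sum_{j\ge2}x_j\bar y_j$. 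Since $|\langle X',Y'\rangle|\le|X'||Y'|$ and $|\operatorname{Re}(\bar x_1y_1\langle X',Y'\rangle)|\le|x_1||y_1||X'||Y'|$, the $B$-bracket dominates $(|x_1||Y'|-|y_1||X'|)^2\ge0$, so $R(X,\bar X,Y,\bar Y)\ge A|x_1|^2|y_1|^2+B(|x_1||Y'|-|y_1||X'|)^2+\tfrac C2|X'|^2|Y'|^2$, which a short case check (split on $x_1=0$ vs.\ $x_1\ne0$) shows is $>0$ whenever $X,Y\ne0$ and $A,B,C>0$; the converse follows by testing on $(e_1,e_1)$, $(e_1,e_i)$, $(e_i,e_i)$. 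Finally, under $\xi'>0$ completeness is equivalent to $\xi<1$: $\xi$ increases from $0$, and if its limit $L$ exceeds $1$ then $\xi\ge\mu>1$ eventually for some $\mu$, forcing $h(r)\le c\,r^{-\mu}$ and $\int^\infty\sqrt{h/r}\,dr<\infty$ (incomplete), whereas $L\le1$ gives $0<\xi<1$ everywhere, hence completeness by (1). Assembling these yields (2).

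For (3): a $\mathsf{U}(n)$-invariant Kähler form on $\mathbb{C}^n$ equals $\tfrac{\sqrt{-1}}{2}\ddbar P(r)$ for a function $P$ of $r=|z|^2$, so the metric has components (\ref{eq:g}) with $f=P'$, $h=(rf)'$; positive-definiteness forces $f,h>0$, so $\xi=-rh'/h$ is a well-defined smooth function on $[0,\infty)$ with $\xi(0)=0$, and after a constant rescaling of $P$ normalizing $h(0)=1$ the metric is exactly the one attached to $\xi$. By part (2), positive bisectional curvature is equivalent to $\xi'>0$ and — granting completeness — to $0<\xi<1$, so the metric is of the form in (2). The main obstacle is the bisectional-curvature step in (2): obtaining the closed form for $R(X,\bar X,Y,\bar Y)$, which forces one to pin down the mixed/off-diagonal components not displayed in (\ref{eq:A})--(\ref{eq:C}) through the $\mathsf{U}(n-1)$ isotropy, and then extracting \emph{strict} positivity, the one delicate point being the indefinite-sign cross term $2B\operatorname{Re}(\bar x_1y_1\langle X',Y'\rangle)$, which has to be absorbed into the diagonal $B$-terms by Cauchy--Schwarz; the "completeness $\Rightarrow\xi<1$" point is a secondary technicality handled by the decay bound on $h$.
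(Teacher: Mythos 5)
The paper itself offers no proof of this statement: it is quoted verbatim from Wu--Zheng \cite{WZ} and used as a black box, so there is no in-paper argument to compare with, and your proposal amounts to a reconstruction of the original Wu--Zheng proof. As such it is essentially correct. Part (1) via $h(r)\ge c/r$ for $r\ge 1$ and the completeness criterion (\ref{eq:10}) is fine. In (2), the identities $\left(\int_0^r h-rh\right)'=\xi h$ and $\left(rh-(1-\xi)\int_0^r h\right)'=\xi'\int_0^r h$ (both using $rh'=-\xi h$) correctly give $\xi'>0\Rightarrow B,C>0$, while $A=\xi'/h$ handles the converse. Your closed form for $R(X,\bar X,Y,\bar Y)$ does not even require a fresh isotropy argument: the components you list ($R_{1\bar 1k\bar l}=B\delta_{kl}$, $R_{1\bar jk\bar 1}=B\delta_{jk}$, $R_{i\bar jk\bar l}=\tfrac{C}{2}(\delta_{ij}\delta_{kl}+\delta_{il}\delta_{kj})$ for indices $\ge 2$, the rest zero) are exactly what (\ref{eq:A})--(\ref{eq:C}) together with the paper's statement that all remaining components vanish up to the K\"ahler symmetries already assert, and the Cauchy--Schwarz absorption plus your case check do give strict positivity, with the converse by testing on $(e_1,e_1)$, $(e_1,e_i)$, $(e_i,e_i)$. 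The dichotomy $\lim_{r\to\infty}\xi\le 1$ (complete, and then $0<\xi<1$ by monotonicity) versus $\lim\xi>1$ (then $h\le cr^{-\mu}$ with $\mu>1$, so $\int^\infty\sqrt{h/r}\,dr<\infty$ and $g$ is incomplete) is the right way to close part (2).

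Two small corrections. In (3), normalizing $h(0)=1$ by a ``constant rescaling of $P$'' is not right: multiplying $P$ by a constant multiplies the metric by that constant, i.e.\ produces a homothetic, not the same, metric. The normalization should instead be the biholomorphic dilation $z\mapsto\lambda z$ with $\lambda^2=1/h(0)$, under which $h(r)$ becomes $\lambda^2h(\lambda^2r)$ and $\xi(r)$ becomes $\xi(\lambda^2 r)$, so the given metric is holomorphically isometric to one generated by a $\xi$ as in (2); this is the intended meaning of ``given by'' and is how Wu--Zheng state it. Secondly, at the origin one has $A(0)=\xi'(0)$, $C(0)=2B(0)=\xi'(0)$, so strict positivity of the bisectional curvature at $r=0$ corresponds to $\xi'(0)>0$; this endpoint ambiguity is already present in the quoted statement (whose hypotheses are phrased on $(0,\infty)$) and does not affect how the theorem is used in the paper, but your proof of the ``if'' direction only yields positivity for $r>0$ as written.
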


  Using the above notations and formulations, in \cite{HT},  the authors proved the following theorem.
  \begin{theorem}[Huang-Tam]\label{tm:HT}
  	An $\mathsf{U}(n)$-invariant K\"ahler metric on $\mathbb{C}^n$ has nononegative orthogonal bisectional curvature if and only if $A+C\ge 0, B\ge 0$ and $C\ge 0$.
  \end{theorem}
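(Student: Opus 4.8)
The plan is to use the $\mathsf{U}(n)$-invariance to reduce the (NOB) inequality to one point and the explicit curvature components \eqref{eq:A}--\eqref{eq:C}, and then recast $R(X,\bar X,Y,\bar Y)\ge 0$ as an elementary inequality in four nonnegative real parameters.

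First, since the metric and hence its curvature tensor are $\mathsf{U}(n)$-invariant and $\mathsf{U}(n)$ acts transitively on each sphere $\{|z|=\text{const}\}$, it suffices to verify (NOB) at points of the form $(z_1,0,\dots,0)$. There, with respect to the orthonormal frame $\{e_1=h^{-1/2}\p_{z_1},\,e_i=f^{-1/2}\p_{z_i}\ (i\ge 2)\}$, the curvature tensor has the special form \eqref{eq:A}--\eqref{eq:C}: its only nonzero components (up to the Kähler symmetries $R_{i\bar jk\bar l}=R_{k\bar ji\bar l}=R_{i\bar lk\bar j}$ and complex conjugation) are $R_{1\bar11\bar1}=A$, $R_{1\bar1i\bar i}=B$ for $i\ge 2$, $R_{i\bar ii\bar i}=C$ and $R_{i\bar ij\bar j}=C/2$ for $i\ne j\ge 2$.

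Next, I would take orthogonal $(1,0)$-vectors $X=\sum_\alpha x_\alpha e_\alpha$, $Y=\sum_\alpha y_\alpha e_\alpha$ with $\sum_\alpha x_\alpha\bar y_\alpha=0$ and expand $R(X,\bar X,Y,\bar Y)=\sum R_{\alpha\bar\beta\gamma\bar\delta}x_\alpha\bar x_\beta y_\gamma\bar y_\delta$ using the components above. Writing $w_\alpha=x_\alpha\bar y_\alpha$ (so orthogonality reads $w_1=-\sum_{i\ge 2}w_i$), $a=|x_1|^2$, $b=|y_1|^2$, $\tilde a=\sum_{i\ge 2}|x_i|^2$, $\tilde b=\sum_{i\ge 2}|y_i|^2$, and using $\bigl|\sum_{i\ge 2}w_i\bigr|^2=|w_1|^2$, $\sum_{i\ge 2}\operatorname{Re}(w_i\bar w_1)=-|w_1|^2$ and $|w_1|^2=ab$, the terms carrying the coefficient $B$ collapse to $B\bigl(a\tilde b+b\tilde a-2ab\bigr)$ and those carrying $C$ (the diagonal $R_{i\bar ii\bar i}$ terms together with the $R_{i\bar ij\bar j}$ terms) collapse to $\tfrac C2\bigl(\tilde a\tilde b+ab\bigr)$. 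Adding $A\,ab$ and rearranging gives the exact identity
$$R(X,\bar X,Y,\bar Y)=(A+C)\,ab+B\bigl(a\tilde b+b\tilde a-2ab\bigr)+\tfrac C2\bigl(\tilde a\tilde b-ab\bigr),$$
while Cauchy--Schwarz together with orthogonality yields the constraint $ab=|w_1|^2=\bigl|\sum_{i\ge 2}x_i\bar y_i\bigr|^2\le\tilde a\tilde b$.

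Finally I would read off both directions from this identity. For sufficiency, assume $A+C\ge 0$, $B\ge 0$, $C\ge 0$; then each of the three summands above is nonnegative: the first is immediate, the third uses $\tilde a\tilde b\ge ab$, and the middle one uses $a\tilde b+b\tilde a\ge 2\sqrt{ab\,\tilde a\tilde b}\ge 2ab$. For necessity, testing (NOB) on pairs of distinct frame vectors gives $R(e_1,\bar e_1,e_i,\bar e_i)=B\ge 0$ and $R(e_i,\bar e_i,e_j,\bar e_j)=C/2\ge 0$ for $i\ne j\ge 2$, while choosing $X=(\sqrt a,\sqrt a,0,\dots)$, $Y=(\sqrt b,-\sqrt b,0,\dots)$ (which forces $ab=\tilde a\tilde b$, the equality case of Cauchy--Schwarz) reduces the identity to $(A+C)ab$, so $A+C\ge 0$. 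I expect the main obstacle to be the contraction bookkeeping, namely correctly enumerating all nonzero components of the Kähler curvature tensor under its symmetries and collapsing the cross-terms via the orthogonality relation $w_1=-\sum_{i\ge 2}w_i$; once the displayed identity and the constraint $ab\le\tilde a\tilde b$ are in hand, the equivalence is immediate.
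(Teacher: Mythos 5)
The paper itself offers no proof of this statement: it is quoted verbatim from Huang--Tam \cite{HT}, so there is nothing internal to compare against. Judged on its own, your argument is correct and is exactly the computation one expects behind the quoted result: reduce to $(z_1,0,\dots,0)$ by $\mathsf{U}(n)$-invariance, use that the only nonvanishing curvature components in the frame of (\ref{eq:A})--(\ref{eq:C}) are $A$, $B$, $C$ and $C/2$ up to the K\"ahler symmetries, and expand $R(X,\bar X,Y,\bar Y)$ under the orthogonality constraint. I checked your identity: with $w_\alpha=x_\alpha\bar y_\alpha$ the $B$-terms give $B(a\tilde b+b\tilde a+2\operatorname{Re}(w_1\overline{\sum_{i\ge2}w_i}))=B(a\tilde b+b\tilde a-2ab)$, and in the $C$-terms the cancellation is exact because $|w_i|^2=|x_i|^2|y_i|^2$, so indeed $R(X,\bar X,Y,\bar Y)=(A+C)ab+B(a\tilde b+b\tilde a-2ab)+\tfrac C2(\tilde a\tilde b-ab)$ with the constraint $ab=\bigl|\sum_{i\ge2}x_i\bar y_i\bigr|^2\le\tilde a\tilde b$; both directions then follow as you say, and your test pair $X=e_1+e_2$, $Y=e_1-e_2$ does give $R=A+C$.

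One caveat: your necessity argument for $C\ge0$ tests the pair $(e_i,e_j)$ with $i\ne j\ge2$, which requires $n\ge3$. For $n=2$ the orthogonality constraint forces $ab=\tilde a\tilde b$, the identity degenerates to $(A+C)ab+B(a\tilde b+b\tilde a-2ab)$, and no pointwise choice of orthogonal vectors isolates $C$. The statement is still true for $n=2$, but the necessity of $C\ge0$ must then be extracted globally from $B\ge0$: since $(rf)^2B(r)=\int_0^r(\xi(r)-\xi(t))h(t)\,dt$, evaluating $B$ at a running minimum of $\xi$ shows $B\ge0$ for all $r$ forces $\xi\ge0$, and then $C=\frac{2}{(rf)^2}\int_0^r\xi h\,dt\ge0$. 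So either restrict your frame test to $n\ge3$ or add this one-line argument for $n=2$; the sufficiency direction, which is what the paper actually uses in its construction, is fine for all $n$ as you wrote it.
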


 Let $\xi$ be a smooth function on $[0, \infty)$ with $\xi(0)=0, \xi'(r)>0$ and $0<\xi(r)<1$ for $0<r<\infty$. Let $a=\lim_{r\rightarrow \infty} \xi(r)$. Then $0<a\le 1$. By the above this gives a complete $\mathsf{U}(n)$-invariant metric on $\mathbb{C}^n$ with positive bisectional curvature. The strategy of \cite{HT} is to perturb this metric by adding a perturbation term to $\xi$ to obtain the one with needed property.  In particular, \cite{HT}, produces metric with (NQOB), but does not satisfy (NOB) nor nonnegativity of the Ricci curvature. For that  \cite{HT}  first obtained the following estimates \cite{HT,WZ} (cf. Lemma 4.1 of \cite{HT}).
 \begin{lemma}\label{lm:HT}
 	Let $\xi$ be as above with $\lim_{r\to \infty}\xi =a \, (\in (0, 1))$. We have the following:
 	
(1) For $r>0$, $\left( r h-(1-\xi)\int_0^r h\right)' >0$, and
 $$
 \lim_{r\to \infty} \int_0^r h=\infty, \quad  \lim_{r\to \infty} h=0, \quad \lim_{r\to \infty} \frac{rh}{\int_0^r h}=1-a.
 $$

(2) For any $\epsilon>0$, and for any $r_0>0$, there is $R>r_0$ such that
 	$$
 	\xi'(R)-\epsilon h(R)C(R)<0.
 	$$
 	
(3) $\lim_{r\rightarrow \infty} h(r)C(r)=0$.
 	
(4) For all $\epsilon>0$, there exists $\delta>0$ such that if $R\ge 3$, $\delta\ge \eta\ge 0$ is a smooth function with support in $[R-1, R+1]$, then for all $r\ge 0$,
 	$$
 	h(r)\le \bar{h}(r)\le (1+\epsilon)h(r), \quad\mbox{and}\, \int_0^r h\le \int_0^r \bar{h}\le (1+\epsilon)\int_0^r h,
 	$$
 	where $\bar{h}(r)=\exp(-\int_0^r \frac{\bar{\xi}}{t}dt)$ and $\bar{\xi}=\xi-\eta$.
 \end{lemma}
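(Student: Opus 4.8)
The plan is to establish the four assertions in turn; each reduces to an elementary estimate for $h$, using only the defining relation $r\,h'(r)=-\xi(r)h(r)$ together with $0<\xi<1$ on $(0,\infty)$ and $\xi(r)\to a\in(0,1)$. For (1) I would differentiate directly: from $rh'=-\xi h$ one gets $(rh)'=(1-\xi)h$, hence
\[
\left(rh-(1-\xi)\int_0^r h\right)'=(1-\xi)h-\left(-\xi'\int_0^r h+(1-\xi)h\right)=\xi'\int_0^r h>0,
\]
since $\xi'>0$. For the three limits, note that $\log h(r)=-\int_0^r \frac{\xi(s)}{s}\,ds$, the integral converging near $0$ because $\xi(s)/s\to\xi'(0)$; comparing with the logarithmic integral, for any $\epsilon_0<\min\{a,1-a\}$ there are constants $c_1,c_2>0$ with $c_1 r^{-(a+\epsilon_0)}\le h(r)\le c_2 r^{-(a-\epsilon_0)}$ for large $r$. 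Since $a-\epsilon_0>0$ this forces $h(r)\to0$, and since $a+\epsilon_0<1$ it forces $\int_0^\infty h=\infty$. The last limit is L'H\^opital's rule in the $\infty/\infty$ form: the quotient of the derivatives of $rh$ and $\int_0^r h$ is $\frac{(1-\xi)h}{h}=1-\xi\to1-a$, and the denominator $\int_0^r h\to\infty$, so $\frac{rh}{\int_0^r h}\to1-a$.

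For (2) and (3) the key step is to rewrite $C$ using $rf=\int_0^r h$:
\[
C(r)=\frac{2}{\left(\int_0^r h\right)^2}\left(\int_0^r h-rh\right)=\frac{2}{\int_0^r h}\left(1-\frac{rh}{\int_0^r h}\right).
\]
By (1) the bracketed factor tends to $a>0$; in particular it is bounded, and it is $\ge a/2$ once $r$ is large, so $C(r)\ge a/\int_0^r h$ for large $r$. For (3) this gives $h(r)C(r)=\frac{2h(r)}{\int_0^r h}\bigl(1-\frac{rh}{\int_0^r h}\bigr)\to0$, because $h(r)\to0$ while $\int_0^r h\to\infty$. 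For (2) I argue by contradiction: if $\xi'(R)\ge\epsilon\,h(R)C(R)$ for all $R>r_0$, then for $R$ large $\xi'(R)\ge\epsilon a\,\frac{h(R)}{\int_0^R h}=\epsilon a\,\frac{d}{dR}\log\Bigl(\int_0^R h\Bigr)$; integrating from a large $r_1$ to $\infty$ makes the left side $a-\xi(r_1)<\infty$ but the right side $+\infty$ (since $\int_0^\infty h=\infty$), a contradiction. Hence $\xi'(R)<\epsilon\,h(R)C(R)$ for arbitrarily large $R$, as claimed.

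For (4), observe that $\frac{\bar h(r)}{h(r)}=\exp\bigl(\int_0^r \frac{\eta(t)}{t}\,dt\bigr)$; since $\eta\ge0$ this ratio is $\ge1$, giving $\bar h\ge h$ pointwise and hence $\int_0^r\bar h\ge\int_0^r h$. For the upper bounds, $\eta$ is supported in $[R-1,R+1]\subseteq[2,\infty)$ (because $R\ge3$), so
\[
\int_0^r\frac{\eta(t)}{t}\,dt\le\frac{1}{R-1}\int_{R-1}^{R+1}\eta(t)\,dt\le\frac{2\delta}{R-1}\le\delta,
\]
whence $\bar h(r)\le e^{\delta}h(r)$ and $\int_0^r\bar h\le e^{\delta}\int_0^r h$ for every $r\ge0$; choosing $\delta=\log(1+\epsilon)$ finishes the proof.

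I do not anticipate a serious obstacle: all four parts are direct computations or comparisons with the model $h(r)\sim r^{-a}$. The only steps demanding a little care are the asymptotics $\frac{rh}{\int_0^r h}\to 1-a$ in (1), which fixes the correct size of $C$, and the contradiction argument in (2), where one must make sure the divergence $\int_0^\infty h=\infty$ is genuinely used — it is precisely what forces $\xi'$ to dip below $\epsilon\,h C$ beyond every $r_0$.
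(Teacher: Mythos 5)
Your proposal is correct, and in fact it supplies a proof that the paper itself does not give: the paper simply quotes this lemma from Huang--Tam (Lemma 4.1 of \cite{HT}, cf.\ also \cite{WZ}), so there is no in-paper argument to compare with. Your computations are the natural ones and check out: in (1), $(rh)'=(1-\xi)h$ gives the derivative identity $\xi'\int_0^r h>0$; the two-sided power bound $c_1r^{-(a+\epsilon_0)}\le h\le c_2 r^{-(a-\epsilon_0)}$ uses $0<a<1$ exactly as stated in the lemma (note $a<1$ is genuinely needed for your route to $\int_0^\infty h=\infty$; with $a=1$ one would instead use $\xi<a$ to get $h\ge c/r$), and the L'H\^opital step is the valid ``denominator $\to\infty$'' form with $(rh)'/h=1-\xi\to 1-a$. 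Writing $C=\frac{2}{\int_0^r h}\bigl(1-\frac{rh}{\int_0^r h}\bigr)$ immediately yields (3), and your contradiction argument for (2) --- integrating $\xi'\ge \epsilon a\,\frac{d}{dR}\log\int_0^R h$ against the boundedness of $\xi$ and the divergence of $\log\int_0^R h$ --- is exactly the mechanism that forces $\xi'$ below $\epsilon hC$ beyond every $r_0$. Part (4) is likewise complete: $\bar h/h=\exp\bigl(\int_0^r\eta(t)/t\,dt\bigr)\in[1,e^{\delta}]$ since the support lies in $[R-1,R+1]\subset[2,\infty)$, and $\delta=\log(1+\epsilon)$ converts this into the stated multiplicative bounds for both $\bar h$ and $\int_0^r\bar h$. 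So the proposal is a correct, self-contained replacement for the cited Lemma 4.1 of \cite{HT}, obtained by the same elementary estimates one would expect there.
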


 Let $\phi$ be a cutoff function on $\mathbb{R}$ as in \cite{HT} such that

(i) $0\le \phi\le c_0$ with $c_0$ being an absolute constant;

(ii) $\mbox{supp} (\phi) \subset [-1, 1]$;

(iii) $\phi'(0)=1$ and $|\phi'|\le 1$.

The construction is to perturb $\xi$ into $\bar{\xi}(r)=\xi(r)-\alpha h(R)C(R)\phi(r-R)$ for suitable choice of $R$, $\alpha$. Note that this only changes the value of $\xi$ on a compact set. Once $\bar{h}$ is defined, equations (\ref{eq:A})--(\ref{eq:C}) define the corresponding curvature
components $\bar{A}, \bar{B}, \bar{C}$ of the perturbed metric.
\begin{theorem}
	There is $1>\alpha>0$ such that for any $r_0>0$ there is $R>r_0$ satisfying the following: If $\bar{\xi}(r)=\xi(r)-\alpha h(R)C(R)\phi(r-R)$, then $\bar{\xi}$ determines a complete $\mathsf{U}(n)$-invariant K\"ahler metric on $\mathbb{C}^n$ such that
	
	\begin{enumerate}
		\item $\bar{A}(R)<0$;
		\item $\bar{A}+\bar{C}>0$ on $[R-1, R+1]$;
		\item $\bar{B}(r)>0$ for all $r$; and
		\item $\bar{C}(r)>0$ for all $r$.
			\end{enumerate}
		Then $\bar{\xi}$ will give a compete $\mathsf{U}(n)$-invariant K\"ahler metric which satisfies (NOB) but does not have nonnegative bisectional curvature.
\end{theorem}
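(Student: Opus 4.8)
The plan is to verify the four numbered properties directly from the formulas (\ref{eq:A})--(\ref{eq:C}), read now with $\bar\xi,\bar h,\bar f$ in place of $\xi,h,f$, after fixing $\alpha=\tfrac14$ once and for all (any $\alpha\in(0,\tfrac12)$ works) and then taking $R$ suitably large. First I would record the structure of the perturbation. Writing $\eta(r)=\alpha\,h(R)C(R)\,\phi(r-R)\ge0$, which is smooth and supported in $[R-1,R+1]$, one has $\bar\xi=\xi-\eta$ and $\bar h(r)=h(r)\exp\!\big(\int_0^r\eta(s)s^{-1}\,ds\big)$, so $\bar h\ge h$, $\bar h=h$ on $[0,R-1]$, and $\bar h=e^{c_R}h$ on $[R+1,\infty)$ with $c_R:=\int_{R-1}^{R+1}\eta(s)s^{-1}\,ds>0$. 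By Lemma \ref{lm:HT}(3), $h(R)C(R)\to0$; since also $\xi$ is increasing with $\xi(r)\to a\in(0,1)$, for $R$ large we have $\xi\ge a/2$ on $[R-1,R+1]$ and $\eta\le\alpha c_0h(R)C(R)<a/4$, hence $0<\bar\xi<1$ on $(0,\infty)$. Since $\bar h'=-r^{-1}\bar\xi\,\bar h$, this forces $\bar h$ to be strictly decreasing on $(0,\infty)$; together with $\bar f=\tfrac1r\int_0^r\bar h>0$ and $\int_0^\infty\sqrt{\bar h/r}\,dr\ge\int_0^\infty\sqrt{h/r}\,dr=\infty$ (as $\bar h\ge h$), condition (\ref{eq:10}) shows $\bar\xi$ determines a complete $\mathsf{U}(n)$-invariant K\"ahler metric.

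For the choice of $R$: given $r_0>0$, I would first fix $R_1$ so large that the finitely many ``for $R$ large'' conditions appearing below all hold for $R\ge R_1$ (each relevant error quantity $h(R)$, $h(R-1)/\int_0^{R-1}h$, $C(R-1)/C(R)-1$ tends to $0$, and $\xi(R-1)\to a$, by Lemma \ref{lm:HT}(1),(3)), and then apply Lemma \ref{lm:HT}(2) with parameter $\e=\alpha$ and base point $\max\{r_0,R_1,3\}$ to get $R>\max\{r_0,R_1,3\}$ with $\xi'(R)<\alpha h(R)C(R)$. Properties (1) and (4) are then immediate: since $\phi'(0)=1$, $\bar\xi'(R)=\xi'(R)-\alpha h(R)C(R)<0$, so $\bar A(R)=\bar\xi'(R)/\bar h(R)<0$; and since $\bar h$ is strictly decreasing, $\int_0^r\bar h>r\bar h(r)$ for all $r>0$, i.e.\ $\bar C(r)=\tfrac{2}{(r\bar f)^2}\big(\int_0^r\bar h-r\bar h\big)>0$.

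For (3) I would use the identity $\big(r\bar h-(1-\bar\xi)\int_0^r\bar h\big)'=\bar\xi'\int_0^r\bar h$ (a direct consequence of $\bar h'=-r^{-1}\bar\xi\bar h$, with both sides vanishing at $r=0$), so that $(r\bar f)^2\bar B(r)=\int_0^r\bar\xi'(s)\big(\int_0^s\bar h\big)\,ds$. Splitting $\bar\xi'=\xi'-\alpha h(R)C(R)\phi'(\cdot-R)$ and using $\bar h\ge h$, $\xi'\ge0$, $|\phi'|\le1$, $\operatorname{supp}\phi'\subset[-1,1]$ together with the companion identity $\int_0^r\xi'(s)\big(\int_0^s h\big)\,ds=rh-(1-\xi)\int_0^rh$, one gets $(r\bar f)^2\bar B(r)\ge\big(rh-(1-\xi)\int_0^rh\big)-2\alpha h(R)C(R)\int_0^{R+1}\bar h$ for $r\ge R-1$, while for $r\le R-1$ the left side simply equals $rh-(1-\xi)\int_0^rh>0$. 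Now $rh-(1-\xi)\int_0^rh$ is increasing and positive by Lemma \ref{lm:HT}(1), hence bounded below by a constant $c_1>0$ independent of $R$ for $r\ge R-1$, while $\bar h$ decreasing gives $\int_0^{R+1}\bar h\le\int_0^{R-1}h+2h(R-1)$ and $h(R)C(R)\int_0^{R-1}h\le2h(R)\to0$; so $\bar B>0$ for $R$ large. For (2), on $[R-1,R+1]$ one has $\bar h(r)\ge h(r)\ge\tfrac{R}{R+1}h(R)\ge\tfrac12h(R)$ (using $\xi\le1$), so $\bar A(r)=\bar\xi'(r)/\bar h(r)\ge-2\alpha C(R)$; and since $\bar h$ is decreasing with $\bar h(r)\le\bar h(R-1)=h(R-1)\le h(s)$ for $s\le R-1$, one has $\int_0^r\bar h-r\bar h(r)\ge\int_0^{R-1}\big(h-h(R-1)\big)$, and dividing by $\big(\int_0^r\bar h\big)^2\le(1+o(1))\big(\int_0^{R-1}h\big)^2$ and using $C(R-1)/C(R)\to1$ yields $\bar C(r)\ge(1-o(1))C(R)$ there. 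Hence on $[R-1,R+1]$, $\bar A+\bar C\ge(1-2\alpha-o(1))C(R)>0$ (recall $C(R)>0$, the unperturbed metric having positive bisectional curvature), while outside $[R-1,R+1]$ we have $\bar A=\xi'/\bar h>0$, so $\bar A+\bar C>0$ everywhere.

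With $\bar A+\bar C>0$, $\bar B>0$ and $\bar C>0$ everywhere, Theorem \ref{tm:HT} gives that the resulting metric has (even positive) orthogonal bisectional curvature, while $\bar A(R)=R_{1\bar{1}1\bar{1}}<0$ at a point with $|z|^2=R$ exhibits negative holomorphic sectional curvature, hence negative bisectional curvature for some pair of vectors. I expect the main obstacle to be the two-sided control of the perturbation amplitude $\alpha h(R)C(R)$: it must be large enough to beat $\xi'(R)$ --- which is exactly the content of Lemma \ref{lm:HT}(2) and is what makes $\bar A(R)<0$ achievable --- yet small enough globally that $\bar B$ and $\bar C$ remain positive and $\bar A+\bar C$ remains positive on $[R-1,R+1]$. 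The quantitative facts that reconcile these are $h(R)C(R)\to0$ (Lemma \ref{lm:HT}(3)), the monotonicity and positivity of $rh-(1-\xi)\int_0^rh$ (Lemma \ref{lm:HT}(1)), and the slow variation of $\int_0^{R-1}h$ and of $C$ near $R$; the threshold $\alpha<\tfrac12$ comes precisely from matching $\bar A\gtrsim-2\alpha C(R)$ against $\bar C\gtrsim C(R)$ on $[R-1,R+1]$.
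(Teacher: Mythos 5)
Your proposal is correct and follows essentially the same route as the paper: the same perturbation $\bar\xi=\xi-\alpha h(R)C(R)\phi(\cdot-R)$, with Lemma \ref{lm:HT}(2) forcing $\bar A(R)<0$, Lemma \ref{lm:HT}(1),(3) plus the smallness of $h(R)C(R)$ preserving $\bar B>0$, $\bar C>0$ and $\bar A+\bar C>0$ near $R$, and Theorem \ref{tm:HT} giving (NOB). The only differences are cosmetic refinements (fixing $\alpha=\tfrac14$ instead of an $\epsilon$-dependent threshold, and verifying the $\bar B$, $\bar C$ bounds directly via the identity $(r\bar f)^2\bar B=\int_0^r\bar\xi'\big(\int_0^s\bar h\big)ds$ and monotonicity of $\bar h$ rather than citing Lemmas 4.2--4.3 of \cite{HT}), which if anything make the argument more self-contained.
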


\begin{proof}
	Let $\beta=\alpha h(R)C(R)$. Let $\epsilon>0$ and $R\ge 3$ to be chosen later. Then by Lemma \ref{lm:HT} (3), there is $a>\delta>0$ independent of $R$ such that $\bar{\xi}(r)=\xi(r)-\beta\phi(r-R)$ with $\beta>0$ and $\beta c_0<\delta$, then $\bar{\xi}$ determines a complete $\mathsf{U}(n)$-invariant K\"ahler metric on $\mathbb{C}^n$ such that for all $r$
	\begin{equation}
	h(r)\le \bar{h}(r)\le (1+\epsilon)h(r), \, \int_0^r h\le \int_0^r \bar{h}\le (1+\epsilon)\int_0^r h.
	\end{equation}
Hence $\bar{\xi}$, $\bar{h}$, $\bar{f}$ will define a complete unitary symmetric K\"ahler metric on $\mathbb{C}^n$. Assuming (1)-(4) in the theorem, (1)  implies that the metric does not have nonnegative holomorphic sectional curvature (hence can not have non-negative bisectional curvature). The equation (\ref{eq:A}) together with (4) implies that (2) is sufficient to conclude that $\bar{A}+\bar{C}>0$ for all $r$.  Hence by Theorem \ref{tm:HT} the perturbed metric has (NOB).
	
	By Lemma \ref{lm:HT} (2), there is $R$ large enough, such that
	$$
	\bar{\xi}'(R)=\xi'(R)-\beta\le (\epsilon-\alpha)h(R)C(R).
	$$
Hence for (1), it suffices to choose $\alpha >\epsilon$.

 By the formula (\ref{eq:C}) and the proof of Lemma 4.2 in \cite{HT}, we may choose a large $r_1$ so that if $R>r_1$ and for $r\in [R-1, R+1]$,
	$$
	\bar{C}(r)\ge \frac{2}{(1+\epsilon)^2\int_0^R h}(a-2\epsilon+a\epsilon-\epsilon^2)
	$$
	provided $a-2\epsilon+a\epsilon-\epsilon^2>0$. We choose $\epsilon>0$ so that it satisfies this condition. On the other hand,
	$$
	C(R)\le \frac{2}{\int_0^R h}(a+\epsilon)
	$$
	if $r_1$ is large enough depending only on $\epsilon$ and $R>r_1$. Hence, if $\epsilon$ and $r_1$ satisfy the above conditions, then for $r\in [R-1, R+1]$,
	$$
	\bar{C}(r)\ge \frac{a-2\epsilon+a\epsilon-\epsilon^2}{(a+\epsilon)(1+\epsilon)^2}C(R).
	$$
	Therefore, if $\epsilon>0$ satisfies $a>\epsilon$ and $a-2\epsilon+a\epsilon-\epsilon^2>0$, then we can find $r_1>r_0$ such that if $R>r_1$, then for $r\in  [R-1, R+1]$,
	\begin{eqnarray}
	\bar{A}(r)+\bar{C}(r)&\ge& \frac{\xi'(r)-\beta}{\bar{h}}+\bar{C}(r)\nonumber\\
	&\ge& \frac{-\beta}{\bar{h(r)}}+ \frac{a-2\epsilon+a\epsilon-\epsilon^2}{(a+\epsilon)(1+\epsilon)^2}C(R)\\
	&\ge & -\frac{\beta}{(1-\epsilon)h(R)}+\frac{a-2\epsilon+a\epsilon-\epsilon^2}{(a+\epsilon)(1+\epsilon)^2}C(R)\nonumber\\
	&= &\frac{1}{(1-\epsilon)h(R)}[-\beta+(1-\epsilon)\frac{a-2\epsilon+a\epsilon-\epsilon^2}{(a+\epsilon)(1+\epsilon)^2}h(R)C(R)]\nonumber.
	\end{eqnarray}
	We can choose $\alpha$ with $0<\epsilon<\alpha<1$,  which is a fixed constant depending only on $\epsilon, a$ and $n$ such that
	$$
	\epsilon<\alpha<(1-\epsilon)\frac{a-2\epsilon+a\epsilon-\epsilon^2}{(a+\epsilon)(1+\epsilon)^2}	$$
	Then $\bar{A}(r)+\bar{C}(r)>0$ on $[R-1, R+1]$. To achieve all the requirement above  we can pick $\epsilon$ sufficiently small,  $a=\frac{1}{2}$ and $\epsilon<\alpha<1$. Hence (1) and (2) follows.
	
	To prove (3),  we can appeal Lemma 4.3 (1) of \cite{HT}, for $r_1 $ is large enough and $R>r_1$. In fact by the formula (\ref{eq:C}) and Lemma \ref{lm:HT} it can be seen that
\begin{equation}\label{eq:hp1}
\lim_{r\to \infty} C(r)\int_0^r h =2\left(1-\frac{rh}{\int_0^r h}\right)=2a
\end{equation}
which implies that $\beta(r)=\alpha C(r) h(r) $ satisfies that $\beta(r)\int_0^r h\to 0$ as $r\to \infty$.
On the other hand, for  any $\epsilon_1>0$, there exists $\delta_1>0$ such that if $\alpha c_0\le 2\delta_1$ and $R$ sufficiently large the conclusion in (4) of Lemma 3.1 holds with $\epsilon$ replaced with $\epsilon_1>0$.  The computation in the proof of Lemma 4.3 shows that
\begin{equation}\label{eq:hp2}
(r\bar{f})^2 \bar{B}(r) = r\bar{h}-(1-\bar{\xi}(r))\int_0^r\bar{h} =\int_0^r (\bar{\xi}(r)-\bar{\xi}(t))\bar{h}(t)\, dt.
\end{equation}
Using $\xi'>0$ and $h'<0$ the above gives
$$
(r\bar{f})^2 \bar{B}(r)\ge \int_0^r (\xi(r)-\xi(t))h(t)\, dt-2\delta_1(1+\epsilon_1)h(R-1)-c_0\beta(R)\int_0^R h(t).
$$
Using the second part of (\ref{eq:hp2}) again we have that
$$
\int_0^r (\xi(r)-\xi(t))h(t)\, dt=rh(r) -(1-\xi(r))\int_0^r h(t)\, dt \ge \eta>0
$$
for some $\eta$ by Lemma \ref{lm:HT} part (1). 	This shows that $\bar{B}(r)>0$ if $R\ge r_1$ for some $r_1$ large.

	Since $\bar{h}'=-\frac{\bar{h}\bar{\xi}}{r}<0$ when $r>0$, then $\int_0^r \bar{h}\ge \bar{h} r$ when $r>0$, which implies that (4) satisfies, by the formula (\ref{eq:B}). This provides a simplification of the proof of Lemma 4.3 part (ii) in of \cite{HT}.
\end{proof}

The computation of \cite{HT} also implies the following result.

  \begin{theorem}\label{tm:OBC}
  	A $\mathsf{U}(n)$-invariant K\"ahler metric on $\mathbb{C}^n$ has nononegative orthogonal bisectional curvature and nonnegative Ricci curvature if and only if $A+C\ge 0$,$A+(n-1)B\ge 0$, $ B\ge 0$ and $C\ge 0$.
  \end{theorem}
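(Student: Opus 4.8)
The plan is to derive this characterization directly from the Huang--Tam criterion for (NOB), namely Theorem \ref{tm:HT}, by adding to it the single extra scalar inequality coming from nonnegativity of the Ricci curvature. Since the metric is $\mathsf{U}(n)$-invariant, the Ricci tensor is again $\mathsf{U}(n)$-invariant, so at the point $(z_1,0,\dots,0)$ it is diagonal in the adapted orthonormal frame $\{e_1,\dots,e_n\}$ of (\ref{eq:A})--(\ref{eq:C}), with exactly two distinct eigenvalues: the $e_1$-direction eigenvalue and the common $e_i$-direction eigenvalue for $i\ge 2$. First I would compute these two Ricci eigenvalues by tracing the curvature tensor. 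Using that the only nonzero components (up to symmetry) are $R_{1\bar11\bar1}=A$, $R_{1\bar1i\bar i}=R_{i\bar i1\bar1}=B$ for $i\ge2$, $R_{i\bar ii\bar i}=C$ for $i\ge2$, and $R_{i\bar ij\bar j}=\tfrac12 C$ for $i\ne j$, $i,j\ge2$, one gets
\begin{equation*}
R_{1\bar1}=A+(n-1)B,\qquad R_{i\bar i}=B+C+(n-2)\tfrac12 C=B+\tfrac{n}{2}C\quad(i\ge 2).
\end{equation*}
So nonnegativity of Ricci curvature at this point is exactly the pair of inequalities $A+(n-1)B\ge 0$ and $B+\tfrac n2 C\ge 0$.

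Next I would observe that, in the presence of the (NOB) conditions $B\ge 0$ and $C\ge 0$, the second Ricci inequality $B+\tfrac n2 C\ge 0$ is automatically satisfied and hence redundant. Therefore, combining Theorem \ref{tm:HT} with the computation above, a $\mathsf{U}(n)$-invariant K\"ahler metric on $\mathbb{C}^n$ has (NOB) and nonnegative Ricci curvature if and only if $A+C\ge 0$, $B\ge 0$, $C\ge 0$ (the (NOB) part) together with $A+(n-1)B\ge 0$ (the remaining Ricci part), which is precisely the asserted list. For the converse direction one just notes that these four inequalities imply the three Huang--Tam inequalities plus both Ricci inequalities (again using $B,C\ge 0$ to recover $B+\tfrac n2C\ge 0$), so the metric has (NOB) and $\Ric\ge 0$; one should also recall that completeness is already built into the setup via $0<\xi<1$, as in the Wu--Zheng theorem and Lemma \ref{lm:HT}, so no extra condition is needed there.

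One subtlety worth spelling out is that the curvature formulas (\ref{eq:A})--(\ref{eq:C}) are written only at points of the form $(z_1,0,\dots,0)$; I would remark that by the $\mathsf{U}(n)$-action every point of $\mathbb{C}^n$ can be moved to such a point, so the pointwise conditions (NOB) and $\Ric\ge 0$ hold everywhere on $\mathbb{C}^n$ if and only if the inequalities hold for all $r\in[0,\infty)$. I expect the main (and only real) obstacle to be bookkeeping: correctly accounting for the factor $\tfrac12$ in $R_{i\bar ij\bar j}=\tfrac12 C$ and the multiplicities $(n-1)$ and $(n-2)$ when tracing, so that the Ricci eigenvalues come out as $A+(n-1)B$ and $B+\tfrac n2 C$; everything after that is immediate from Theorem \ref{tm:HT}. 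Since this theorem is stated as ``the computation of \cite{HT} also implies,'' it is appropriate to keep the proof brief, citing Theorem \ref{tm:HT} for the (NOB) part and supplying only the short trace computation for the Ricci part.
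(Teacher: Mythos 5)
Your proposal is correct and is essentially the argument the paper intends: the paper states this theorem without proof, saying only that it follows from the computation of \cite{HT}, and your derivation---Theorem \ref{tm:HT} for the (NOB) part combined with the trace computation giving Ricci eigenvalues $A+(n-1)B$ and $B+\tfrac{n}{2}C$ at $(z_1,0,\dots,0)$, the latter being redundant once $B\ge 0$ and $C\ge 0$---is exactly that computation, with the $\mathsf{U}(n)$-invariance remark correctly handling arbitrary points. No gaps.
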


In fact, algebraically  one can construct a curvature of  (NOB) and nonnegative Ricci curvature but does not have nonnegative holomorphic bisectional curvature.
By Theorem \ref{tm:OBC}, to construct an example of unitary symmetry as the above with  (NOB) and nonnegative Ricci curvature, but not nonnegative bisectional curvature it suffices to find a positive smooth function  on $[0, \infty)$ with $\xi:(0, \infty)\to (0, 1),\, \xi(0)=0$ which satisfies $\xi'<0$ somewhere, but 
\begin{eqnarray*}
B\ge 0&\Longleftrightarrow& rh-(1-\xi)\int_0^r hds\ge 0;\\
C\ge 0&\Longleftrightarrow& \int_0^r h(s)\, ds-rh(r)\ge 0;\\
A+C\ge 0&\Longleftrightarrow&  \xi'+\frac{2h}{(rf)^2}\left(\int_0^r h(s)ds-rh(r)\right)\ge 0;\\
A+(n-1)B\ge 0&\Longleftrightarrow& \xi'+\frac{(n-1)h}{(rf)^2}\left(rh-(1-\xi)\int_0^r h(s)\, ds\right)\ge 0.
\end{eqnarray*}
Note that the second condition always holds as long as $\xi>0$. We are not completely sure if a similar  construction would yield a such metric.


\section*{Acknowledgments.} { We would like to thank Professor Luen-Fei Tam for suggesting the  problem of the gap theorem for manifolds with nonnegative orthogonal bisectional curvature.  }

\end{document}